\documentclass{amsart}

\usepackage{hyperref}
\usepackage[T1]{fontenc}
\usepackage{times}
\usepackage{amssymb,verbatim}
\theoremstyle{plain}
\usepackage[T1]{fontenc}
\usepackage[utf8]{inputenc}
\usepackage[english]{babel}
\usepackage{amssymb}
\usepackage{amsthm}
\usepackage{graphics}
\usepackage{amsmath}
\usepackage{amstext}
\usepackage{multirow}
\usepackage{graphicx}
\usepackage{amsmath}
\usepackage{amssymb}
\usepackage{amsthm}
\usepackage{amstext}
\usepackage{mathrsfs}
\usepackage{amscd}
\usepackage{mathtools}
\usepackage{tikz}
\usetikzlibrary{cd}
\usetikzlibrary{matrix}

\usepackage{todonotes}

\newtheorem{thm}{Theorem}[section]
\newtheorem{lemma}[thm]{Lemma}
\newtheorem{prop}[thm]{Proposition}

\newtheorem{remark}[thm]{Remark}

\newcommand{\mb}{\mathbb}
\newcommand{\mc}{\mathcal}

\newcommand{\C}{\mb C}

\newcommand{\N}{\mb N}

\newcommand{\M}{\mathcal M}

\newcommand{\F}{\mc F}
\newcommand{\G}{\mc G}

\newcommand{\VF}{\mathfrak X}
\newcommand{\VFR}{\widehat{\VF}}

\DeclareMathOperator{\id}{id}

\DeclareMathOperator{\ad}{ad}
\DeclareMathOperator{\Tr}{Tr}
\DeclareMathOperator{\SL}{SL}

\DeclareMathOperator{\Der}{Der}
\DeclareMathOperator{\End}{End}

\numberwithin{equation}{section}

\usepackage{mathtools}
\usepackage{ifmtarg}

%
\addtocounter{section}{0}             
\numberwithin{equation}{section}       

\title[Primitive Lie algebras of rational vector fields]{Primitive Lie algebras of rational vector fields}

\author[G. Casale, F. Loray, J.V. Pereira and F. Touzet ]
{Guy CASALE$^1$, Frank LORAY$^1$,  Jorge Vit\'orio PEREIRA$^{2}$ and Fr\'ed\'eric TOUZET$^1$}
\address{\newline $1$  Univ Rennes, CNRS, IRMAR - UMR 6625, F-35000 Rennes, France\hfill\break
$2$ IMPA, Estrada Dona Castorina, 110, Horto, Rio de Janeiro,
Brasil} \email{$^1$ guy.casale@univ-rennes1.fr, frank.loray@univ-rennes1.fr, \newline frederic.touzet@univ-rennes1.fr}
\email{$^2$ jvp@impa.br}
\subjclass{} \keywords{Foliation, Transverse Structure, Birational Geometry}
\thanks{We thank CNRS, Universit\'e de Rennes 1, ANR-11-LABX-0020-0 program Henri Lebesgue Center,  ANR-16-CE40-0008 project ``{\it Foliage}'' and CAPES-COFECUB Ma 932/19 project. The third author was
supported by CNPq and FAPERJ}
\date{\today}
\makeatletter

\AtBeginDocument{%
  \@ifpackageloaded{refcheck}
  {%
    \@ifundefined{hyperref}{}{%
      \let\T@ref@orig\T@ref%
      \def\T@ref#1{\T@ref@orig{#1}\wrtusdrf{#1}}%
      \let\@refstar@orig\@refstar%
      \def\@refstar#1{\@refstar@orig{#1}\wrtusdrf{#1}}
      \DeclareRobustCommand\ref{\@ifstar\@refstar\T@ref}%
    }%
  }{}%
}

\makeatother

\begin{document}

\begin{abstract}
    Let $\mathfrak g$ be a transitive, finite-dimensional Lie algebra  of rational vector fields on a projective manifold.
    If $\mathfrak g$ is primitive, i.e., does not locally preserve any  foliation, then it determines a rational map to
    an algebraic homogenous space $G/H$ which maps $\mathfrak g$ to $\mathrm{Lie}(G)$.
\end{abstract}

\maketitle

\setcounter{tocdepth}{1}


\section{Introduction}
Integration of transitive Lie algebras $\mathfrak g$ of complete vector fields on a differentiable manifold $X$ gives rise to transitive actions of a Lie group $G$ on $X$. Through this action, $X$ is locally identified with $G/H$ where $H$ is a Lie group integrating the isotropy subalgebra $\mathfrak h \subset \mathfrak g$. In general, the action can be highly transcendental even if the manifold and vector fields in question are algebraic.

This paper studies transitive Lie algebras of rational vector fields on complex projective manifolds.

By a rational vector field on $X$, we mean a derivation of the field of rational functions on $X$.
Such a derivation induces a holomorphic vector field on a Zariski open subset of $X$, which extends meromorphically to the whole of $X$.
Our main results, roughly speaking, give sufficient conditions for the existence of an algebraic (hence global) conjugation, or more generally semi-conjugation, of the action with the standard model $G/H$.

The case where $X$ has dimension $1$ was considered in \cite{MR3842065}: such a Lie algebra
is naturally deduced from the Frobenius integrability of codimension $1$ foliations on transversal curves.
A factorization through $G/H$ is therefore established  and used in \cite[Theorem 6.5]{MR3842065}
(see \ref{SS:curves}) to prove some stronger integrability property for the foliation. We expect the results of this paper to be useful for investigating the transverse dynamics of higher codimension foliations on projective manifolds.

\subsection{Algebraic normalization}
Let $X$ be a projective manifold and let $\mathfrak g$ be a finite-dimensional Lie subalgebra of the Lie algebra of rational vector fields on $X$. Suppose $p \in X$ is any sufficiently general point. In that case, we define the isotropy subalgebra of $\mathfrak g$ at $p$, denoted by $\mathfrak h_p$,  as the subalgebra of $\mathfrak g$ consisting of vector fields vanishing at $p$.

A Lie algebra $\mathfrak g$ as above is called \textit{transitive} if it contains a basis of the $\mathbb C(X)$-vector space
of rational vector fields on $X$. When $\mathfrak g$ is transitive, two sufficiently general points $p,p' \in X$ have isomorphic (through an isomorphism of $\mathfrak g$) isotropy subalgebras $\mathfrak h_p$ and $\mathfrak h_{p'}$. Thus when $\mathfrak g$ is transitive, we can safely refer to the isotropy subalgebra $\mathfrak h$, meaning the isotropy subalgebra for a sufficiently general point $p \in X$. We will denote the normalizer of $\mathfrak h$ in $\mathfrak g$ by $N_{\mathfrak g}(\mathfrak h)$, i.e.,
\[
    N_{\mathfrak g} (\mathfrak h) = \{ v \in \mathfrak g \, \big\vert \,  \ad(v) ( \mathfrak h) \subset \mathfrak h \} \, ,
\]
where $\ad(v)(\cdot) = [ v , \cdot]$ stands for the adjoint action.

We will say that a Lie algebra  is  \textit{complete}, following  \cite[Chapter I, Section 3]{MR559927},  if
all its derivations are inner derivations, and if it has a trivial center.

For a Lie group $G$ and a Lie subgroup $H$, we will denote by $G/H$ the orbit space of the action of $H$ on $G$ by right translations
($(h,g)\mapsto gh^{-1}$). Elements of $G/H$ are also called left cosets. Right invariant vector fields on $G$ are infinitesimal generators of the action of $G$ on itself by left translations. They commute with the infinitesimal generators of the action by right translations,  the left-invariant vector fields.

The result below is an alternative version of \cite[Theorem 5.12]{MR3719487}, which, although less general, is formulated in terms of intrinsic properties of the Lie algebra $\mathfrak g$ and of its isotropy subalgebra $\mathfrak h$. This point is discussed in subsection \ref{S:ingredient}.

\begin{thm}\label{THM:A}
    Let $\mathfrak g$ be a finite-dimensional Lie algebra of rational vector fields on a projective manifold $X$ with isotropy
    algebra equal to $\mathfrak h$.
    If $\mathfrak g$ is transitive, complete and $N_{\mathfrak g}(\mathfrak h) = \mathfrak h$ then there exists a dominant rational map $\varphi : X \dashrightarrow G/H$ to a homogeneous algebraic space such that $\mathfrak g$ coincides with the pull-back under $\varphi$ of the Lie
    algebra of fundamental vector fields on $G/H$.
\end{thm}

As usual, by \textit{fundamental vector fields} on $G/H$, we mean vector fields induced by right invariant vector fields on $G$.

Theorem \ref{THM:A} is not optimal. As shown by Proposition \ref{P:affine}, there exist \textit{ non-complete} transitive Lie algebras of rational vector fields, for instance the Lie algebra $\mathfrak{aff}_n^0=\mathfrak{sl}_n\ltimes \C^n$ of the group of affine motions of $\mathbb C^n$ ($n\geq 2$) with determinant $1$ for which the conclusion of Theorem \ref{THM:A} still holds.  Indeed, one can exhibit an outer derivation on $\mathfrak{aff}_n^0$ by considering the Lie bracket with the radial vector field $R=\sum_{i=1}^n x_i {\partial}/{\partial x_i}$.  It acts trivially on the isotropy subalgebra $\mathfrak{sl}_n$ and non-trivially on $\C^n$.

Nevertheless, we point out that the assumption $N_{\mathfrak g}(\mathfrak h) = \mathfrak h$
cannot be disregarded entirely.  Indeed there are transitive actions of the complete Lie algebra $\mathfrak{sl}(2, \mathbb C)$ on $\mathbb C^3$ which are transitive but not algebraically conjugated to the Lie algebra of right invariant vector fields on $\SL(2,\mathbb C)$, see \cite[Section 3.5]{MR3719487} for examples. The paper \cite{MR2354208} thoroughly studies other explicit examples of Lie algebras of polynomial vector fields on $\mathbb C^3$ isomorphic to $\mathfrak{sl}(2,\mathbb C)$.

\subsection{Primitive Lie algebras}\label{SS:Prim}
A transitive finite-dimensional Lie algebra of rational vector fields $\mathfrak g$ on $X$ is called \textit{primitive}
if its isotropy subalgebra $\mathfrak h$ is maximal among all Lie subalgebras
of $\mathfrak g$. The existence of an intermediate Lie algebra $\mathfrak h\subset\mathfrak l\subset\mathfrak g$ would correspond at a general point $p$ of $X$ to the existence of a foliation
on the germ of neighborhood $(X,p)\simeq(\C^n,0)$ invariant under $\mathfrak g$. Indeed, such Lie algebra $\mathfrak l$ provides a vector subspace
$\mathfrak l (0)$ of the tangent space of $(\C^n,0)$ at the origin isomorphic to $\mathfrak l/\mathfrak h$; one can use transitivity of $\mathfrak g$ to propagate $\mathfrak l(0)$ into a distribution $\mathcal D$ on $(\mathbb C^n,0)$. There is no ambiguity in the definition of $\mathcal D$ since the isotropy algebra $\mathfrak h$ preserves $\mathfrak l$.
The distribution $\mathcal D$ turns out to be Frobenius integrable: the leaf through $0$
is the local orbit of $\mathfrak l$ through $0$, and the other leaves are obtained by $\mathfrak g$-translation. See Section \ref{S:transitive} where we discuss the relationship between intermediate subalgebras containing the isotropy and invariant foliations in a more general setting.

\begin{thm}\label{THM:B}
    Let $\mathfrak g$ be a finite-dimensional Lie algebra of rational vector fields on a projective manifold $X$ of dimension at least two.
    If $\mathfrak g$ is transitive and primitive,  then there exists a rational map $\varphi : X \dashrightarrow G/H$
    to a homogeneous algebraic space such that $\mathfrak g$ coincides with the pull-back under $\varphi$ of the Lie algebra induced by the fundamental vector fields on $G/H$.
\end{thm}

\begin{remark}
    When $X$ has dimension one, the conclusion of Theorem \ref{THM:B} remains true except when  $\mathfrak g$ is  one-dimensional (see  Proposition \ref{P:rationalLie} ). Observe also that, in dimension two or higher, primitiveness automatically implies that $\mathfrak g$ is non-abelian.
\end{remark}

\begin{remark} \label{R: heqnormh}
    The hypothesis of Theorem  \ref{THM:B} force the equality $N_{\mathfrak g}(\mathfrak h) = \mathfrak h$. Indeed, thanks to the inexistence of intermediate Lie subalgebras, either $N_{\mathfrak g}(\mathfrak h) = \mathfrak h$, either $N_{\mathfrak g}(\mathfrak h) = \mathfrak g$. In the latter case, this would imply that $\mathfrak h$ is an ideal and then would provide for any $v\in \mathfrak g -\mathfrak h$ the Lie subalgebras ${\mathfrak h}'=\mathfrak h \oplus \C v$, thus contradicting primitiveness and the fact that $\mathfrak h$ is supposed to have codimension $\geq 2$.
\end{remark}

\begin{remark}
    The obvious necessary condition for the existence of $\varphi$ is that $\mathfrak h$ is algebraic, that is, there exists one can realize $\mathfrak h$ as the Lie algebra of a Zariski closed subgroup $H$ of an algebraic group $G$ with Lie algebra $\mathfrak g$. This condition holds under the assumption of both theorems. Indeed, according to Remark \ref{R: heqnormh}, one has  $N_{\mathfrak g}(\mathfrak h) = \mathfrak h$ so that one can choose $H$ to be the stabilizer of $\mathfrak h$ with respect to the adjoint action $G\times \mathfrak g\to \mathfrak g$.
\end{remark}

\begin{remark}
    Consider the Lie algebra $\mathfrak{aff}=\mathfrak{gl}_n\ltimes \C^n$ of the group of affine motions of $\mathbb C^n$ ($n\geq 2$). This Lie algebra is complete and  $C_{\mathfrak{aff}}(D^1(\mathfrak{aff})) = \{0\}$. From these properties, one gets that $\mathfrak g = \mathfrak{aff} \oplus \mathfrak{aff}$ and $\mathfrak h = {\rm diag}(\mathfrak{aff}) \subset \mathfrak{aff} \oplus \mathfrak{aff}$ satisfy the hypothesis of Theorem  \ref{THM:A}. However, the subspace $\C^n\times \{0 \} + \mathfrak h$ is a non-trivial Lie subalgebra of $\mathfrak g$ containing $\mathfrak{h}$, therefore $(\mathfrak g, \mathfrak h)$ does not satisfy the hypothesis of Theorem \ref{THM:B}.
\end{remark}

\subsection{Lie algebras of rational vector fields}
A theorem by Deligne \cite[appendix B]{MR3719487} ensures that any finite-dimensional Lie algebra $\mathfrak g$ can be realized as a transitive Lie algebra of rational vector fields on a projective manifold $X$. In this Theorem, $\dim \mathfrak g = \dim X$.
On the other hand,  every projective manifold admits many transitive Lie algebras of rational vector fields. To see this, let $X$ be an arbitrary $n$-dimensional projective manifold.
First, embed it on a projective space $\mathbb P^N$ and then consider a linear projection
$\mathbb P^N \dashrightarrow \mathbb P^n$ to obtain a generically finite dominant rational map $\pi : X \dashrightarrow \mathbb P^n$.
The pull-back of vector fields from $\mathbb P^n$ to $X$ (well defined since we are working with rational vector fields and $\pi$ is generically finite) gives a Lie algebra morphism from the Lie algebra of vector fields on $\mathbb P^n$ to the Lie algebra of vector fields on $X$. Since $\mathbb P^n$ has many transitive Lie algebras of vector fields (e.g., the Lie algebra of holomorphic vector fields), the same holds for $X$.

There exist pairs $(\mathfrak g, \mathfrak h)$ not realizable as transitive Lie algebra of rational vector field on an algebraic variety. Some examples are known where the coefficients of vector fields must be rational in the variables and their exponentials. To prove that the exponential is the only transcendental function needed to realize any pairs as a Lie algebra of vector fields is known as the Lie Conjecture \cite{MR2067331}.

It is interesting to compare Theorem \ref{THM:A} with the discussion carried out there about the following
question: which pairs $(\mathfrak g, \mathfrak h)$ can be realized as Lie algebras of polynomial or rational vector fields?

At several points of our exposition, we will use results presented by Jan Draisma in the survey \cite{MR2902724}.

\section{Transitive and  complete Lie algebras}\label{S:transitive}
Throughout the text, we fix $n\ge1$, and we denote by  $\VF_n$ the (infinite-dimensional)
Lie algebra of germs of complex analytic vector fields at the origin of $\mathbb C^n$.
In this paragraph, we present some simple facts about transitive Lie subalgebras of $\VF_n$ and its formal counterpart $\VFR_n$, the Lie algebra of formal vector fields in $n$ variables. In these contexts, transitivity of a Lie subalgebra $ \mathfrak g \subset  \VF_n$ (or $\VFR_n$) means that the evaluation at $0$ from $\mathfrak g$ to $T_0 \mathbb C^n$ is onto.

A transitive Lie subalgebra $\mathfrak g\subset\VFR_n$ has an isotropy subalgebra $\mathfrak h$ at $0$  of codimension $n$, and the vector space $\mathfrak g/\mathfrak h$ is identified with the tangent space $T_0\mathbb C^n$ by the evaluation at $0$. Although our main result only involves finite-dimensional Lie algebra of rational vector fields, many of the transitive Lie algebras properties mentioned in this work
remain valid in the infinite-dimensional and formal case. A good account of their structure is given in  \cite{MR756031} to which we will refer at multiple places.

\begin{lemma}\label{L:hNoIdeal}
    Let $\mathfrak g\subset\VFR_n$ and $\mathfrak h \subset \mathfrak g$ be as above. Then $\mathfrak h$ contains no non-zero  $\mathfrak g$-ideal.
\end{lemma}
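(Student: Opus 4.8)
The plan is to argue by contradiction using the order filtration on formal vector fields together with the transitivity hypothesis. Suppose $\mathfrak a\subseteq\mathfrak h$ is a nonzero $\mathfrak g$-ideal. For a nonzero $w=\sum_i f_i\partial_{x_i}\in\VFR_n$, write $\ord(w)=\min_i\ord(f_i)$ for the order (the lowest degree of a monomial among the coefficients $f_i$), so that $w\in\mathfrak h$ exactly when $\ord(w)\ge 1$, while $\ord(w)=0$ means $w(0)\neq 0$. I would show that from any nonzero $w\in\mathfrak a$ one can produce, by successively bracketing with suitable elements of $\mathfrak g$, an element of $\mathfrak a$ of order $0$; since $\mathfrak a\subseteq\mathfrak h$ forbids this, we conclude $\mathfrak a=0$.

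The input from transitivity is that the evaluation map $\mathfrak g\to T_0\mathbb C^n$, $v\mapsto v(0)$, is surjective (this is the content of the codimension-$n$ hypothesis, since $\mathfrak g/\mathfrak h\cong T_0\mathbb C^n$). Hence for each $1\le j\le n$ there is $v_j\in\mathfrak g$ with $v_j=\partial_{x_j}+r_j$ where $\ord(r_j)\ge 1$. The key step is the following descent claim: if $w\in\mathfrak a$ has $\ord(w)=k\ge 1$, then there is an index $j$ for which $[v_j,w]\in\mathfrak a$ has order exactly $k-1$. To see this, write $w=w_k+(\text{higher order})$ with $w_k=\sum_i p_i\partial_{x_i}$ its nonzero homogeneous leading part, the $p_i$ being homogeneous of degree $k$. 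Then $[\partial_{x_j},w_k]=\sum_i(\partial_{x_j}p_i)\partial_{x_i}$ is homogeneous of degree $k-1$, while $[r_j,w]$ and the brackets of $\partial_{x_j}$ with the higher-order part of $w$ all have order $\ge k$. Thus the degree-$(k-1)$ part of $[v_j,w]$ equals $[\partial_{x_j},w_k]$. Since $k\ge 1$ and $w_k\neq 0$, some coefficient $p_i$ is a nonconstant homogeneous polynomial, so $\partial_{x_j}p_i\neq 0$ for an appropriate $j$; for that $j$ the leading term is nonzero and $\ord([v_j,w])=k-1$.

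Because $\mathfrak a$ is a $\mathfrak g$-ideal, $[v_j,w]$ again lies in $\mathfrak a$, so the claim may be iterated: starting from a nonzero $w\in\mathfrak a$ of order $k$ and applying it $k$ times yields a nonzero element of $\mathfrak a$ of order $0$, i.e. one whose value at the origin is nonzero, contradicting $\mathfrak a\subseteq\mathfrak h$. I expect the only point requiring care---the technical heart of the argument---to be the bookkeeping on orders in the bracket: verifying both that the higher-order remainders $r_j$ and the higher-order terms of $w$ cannot contribute in degree $k-1$, and that the candidate leading term $[\partial_{x_j},w_k]$ can always be kept nonzero by a suitable choice of $j$. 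Once these are in place, the conclusion $\mathfrak a=0$ is immediate.
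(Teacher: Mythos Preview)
Your proof is correct and follows essentially the same approach as the paper: both use the order filtration on $\VFR_n$ and the observation that transitivity allows one to lower the order of any $w\in\mathfrak h$ by one via a bracket with a suitable $v\in\mathfrak g$. The only cosmetic difference is that the paper phrases the contradiction via a minimal-order element of the ideal, whereas you iterate the descent down to order $0$; you also spell out the leading-term computation that the paper leaves implicit.
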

\begin{proof}
    Denote by $\M=(x_1,\ldots,x_n)$ the maximal ideal of $\C[[x_1,\ldots,x_n]]$, and define the order $\mathrm{ord} (w)$
    of an element $w \not=0$ of $\VFR_n$ as the smallest integer $m \geq 0$ such that the image of  $w$ in $\VFR_n/\M^{m+1} \VFR_n$
    is not zero. When $w\in\mathfrak h$, i.e $\mathrm{ord}(w)\geq 1$, there exists, by transitivity, an element $v\in\mathfrak g$
    such that $\mathrm{ord}([v,w])= \mathrm{ord}(w)-1$.
    Let now $\mathfrak k\subset\mathfrak h$ be a $\mathfrak g$-ideal, and, assuming non-trivial, let $w\in\mathfrak k$
    be of minimal order. Then $\mathfrak k$ should also contain $[v,w]$, what contradicts minimality.
\end{proof}

\subsection{Guillemin-Sternberg Theorem}\label{S:GS}
According to \cite[Theorem 3.3 and Remark 3.6]{MR2902724}, a version of Lemma \ref{L:hNoIdeal} led Guillemin-Sternberg to introduce the concept of effective primitive pair $(\mathfrak g,\mathfrak h)$ --- $\mathfrak g$ and $\mathfrak h$ are abstract Lie algebras with $\mathfrak h \subset \mathfrak g$ and $\mathfrak h$ contains no non-trivial $\mathfrak g$-ideal --- and to prove that effective primitive pairs can be realized
as Lie subalgebras of $\VFR_n$ where $n = \dim \mathfrak g - \dim \mathfrak h$  as stated in the result below.

\begin{thm}\label{T:GuilleminSternberg}
    Let $\mathfrak g$ be a Lie algebra over $\mathbb C$
    with a Lie subalgebra $\mathfrak h$ of finite codimension $n$. Then there exists a Lie algebra
    homomorphism $\phi:\mathfrak g\to\VFR_n$ such that $\mathfrak h$ is sent to
    the isotropy subalgebra at $0$. Moreover, the kernel $\mathfrak k=\ker(\phi)$ is the largest
    $\mathfrak g$-ideal contained in $\mathfrak h$, and the homomorphism $\phi$ is unique up to
    the action of $\widehat{ \mathrm{Diff}(\mathbb C^n,0)}$, i.e., the group of formal  diffeomorphisms of $(\mathbb C^n,0)$.
\end{thm}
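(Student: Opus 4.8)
The plan is to realize the action through the universal enveloping algebra and its continuous dual, a construction that depends only on the abstract pair $(\mathfrak g,\mathfrak h)$. First I would form $U(\mathfrak g)$, the left ideal $U(\mathfrak g)\mathfrak h$ generated by $\mathfrak h$, and set $V:=U(\mathfrak g)/U(\mathfrak g)\mathfrak h$. Choosing a vector-space complement $\mathfrak p$ of $\mathfrak h$ in $\mathfrak g$ with basis $e_1,\dots,e_n$, the Poincaré--Birkhoff--Witt theorem identifies $V$, as a filtered vector space, with $\Sym(\mathfrak p)$, so that $V_{\le m}/V_{\le m-1}\cong\Sym^m(\mathfrak g/\mathfrak h)$. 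The structural observation driving everything is that $U(\mathfrak g)\mathfrak h$ is a coideal for the standard cocommutative Hopf structure on $U(\mathfrak g)$ (each $X\in\mathfrak h$ is primitive and $\mathfrak h\subset U(\mathfrak g)\mathfrak h$, and the augmentation kills $U(\mathfrak g)\mathfrak h$), so $V$ inherits a cocommutative coalgebra structure compatible with left multiplication.

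I would then take $R$ to be the continuous dual of $V$ for this filtration, i.e. $R=\varprojlim (V/V_{\le m})^{*}$, the functionals vanishing on some $V_{\le m}$. Cocommutativity makes $R$ a commutative complete local $\mathbb C$-algebra; the filtration dual to $V_{\le\bullet}$ is the $\mathfrak m$-adic one, and since its associated graded is $\Sym\bigl((\mathfrak g/\mathfrak h)^{*}\bigr)\cong\mathbb C[x_1,\dots,x_n]$, completeness yields $R\cong\mathbb C[[x_1,\dots,x_n]]$. For $X\in\mathfrak g$, left multiplication $L_X$ on $V$ is a coderivation (again because $X$ is primitive), so its transpose is a derivation of $R$; setting $\phi(X):=-L_X^{*}$ gives a Lie algebra homomorphism $\phi:\mathfrak g\to\Der(R)=\VFR_n$. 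Since left multiplication by $X$ raises the PBW filtration by at most one, $\phi(X)$ is a genuine continuous formal vector field. Evaluating at the vacuum $\bar 1\in V$ shows that $\phi(X)$ vanishes at $0$ precisely when $X\in\mathfrak h$, while the induced map $\mathfrak g\to T_0$ is exactly the quotient $\mathfrak g\to V_{\le 1}/V_{\le 0}\cong\mathfrak g/\mathfrak h$; hence $\phi(\mathfrak h)$ is the full isotropy and $\phi(\mathfrak g)$ is automatically transitive.

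For the kernel I note that $\mathfrak k=\ker\phi=\{X:\ X\,U(\mathfrak g)\subset U(\mathfrak g)\mathfrak h\}$; intersecting with PBW-degree $\le 1$ gives $\mathfrak k\subset\mathfrak g\cap U(\mathfrak g)\mathfrak h=\mathfrak h$, and as the kernel of a homomorphism it is a $\mathfrak g$-ideal. Conversely, if $\mathfrak k'\subset\mathfrak h$ is any $\mathfrak g$-ideal, then $U(\mathfrak g)\mathfrak k'$ is two-sided, so $X\,U(\mathfrak g)\subset U(\mathfrak g)\mathfrak k'\subset U(\mathfrak g)\mathfrak h$ for $X\in\mathfrak k'$, giving $\mathfrak k'\subset\mathfrak k$. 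Thus $\mathfrak k$ is the largest ideal inside $\mathfrak h$; this is the abstract counterpart of Lemma \ref{L:hNoIdeal}, the effective case being $\mathfrak k=0$.

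For uniqueness, given a second realization $\phi'$ acting by derivations on $R'=\mathbb C[[x]]$ with isotropy $\mathfrak h$, I would reverse the construction: on the restricted dual $C'=\bigcup_k (R'/\mathfrak m'^{k})^{*}$ of distributions at $0$, $\mathfrak g$ acts through $\phi'$, and the orbit map $u\mapsto u\cdot\varepsilon'$ of the evaluation functional $\varepsilon'$ descends (because $\phi'(\mathfrak h)$ fixes $0$) to a morphism of $\mathfrak g$-modules $\Theta\colon V\to C'$. Dualizing gives a local homomorphism of complete $\mathbb C$-algebras $\Theta^{*}\colon R'\to R$ intertwining the two $\mathfrak g$-actions; transitivity means it induces the isomorphism $(\mathfrak m'/\mathfrak m'^{2})^{*}\cong\mathfrak g/\mathfrak h\cong(\mathfrak m/\mathfrak m^{2})^{*}$ on cotangent spaces, so by completeness and a comparison of the graded pieces $\Sym^{k}(\mathfrak g/\mathfrak h)$, $\Theta^{*}$ is an isomorphism of local rings, i.e. a formal diffeomorphism $\Psi\in\widehat{\Diff(\mathbb C^n,0)}$ conjugating $\phi'$ to the canonical $\phi$. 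Applying this to two realizations yields the asserted uniqueness. The main obstacle throughout is the dictionary between the enveloping-algebra side and the power-series side: one must verify that $R=V^{*}$ really is the formal power series ring and that $\phi$ lands in \emph{continuous} derivations (both controlled by the PBW filtration), and, in the uniqueness step, that it is transitivity, rather than mere injectivity of $\mathfrak g/\mathfrak h\to T_0$, that upgrades the comparison map to a cotangent isomorphism and hence to an isomorphism of local rings.
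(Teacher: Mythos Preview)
Your approach is correct and is genuinely different from what the paper does. The paper does not actually prove Theorem~\ref{T:GuilleminSternberg}: it quotes the result from Draisma's survey and, in Subsection~\ref{S:GSfinite}, only sketches the finite-dimensional case geometrically by choosing a Lie group $G$ with $\Lie(G)=\mathfrak g$, forming the foliation $\mathcal H$ by right $H$-translates, and letting right-invariant vector fields descend to the local leaf space $U/\mathcal H\simeq(\mathbb C^n,0)$. The kernel is then identified with $\Lie\bigl(\bigcap_g gHg^{-1}\bigr)$. Uniqueness is not discussed there.

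Your route through $V=U(\mathfrak g)/U(\mathfrak g)\mathfrak h$ and its dual is the standard algebraic (Blattner-type) construction. It has two advantages over the paper's sketch: it is intrinsic to the pair $(\mathfrak g,\mathfrak h)$ and it works uniformly for infinite-dimensional $\mathfrak g$, which the statement of Theorem~\ref{T:GuilleminSternberg} allows. It also yields the uniqueness clause directly, via the comparison map $\Theta:V\to C'$ you describe. Conversely, the Lie-group picture makes the later geometric arguments of the paper (e.g.\ in the proof of Lemma~\ref{L:Cg}) more transparent, since one can literally speak of left- and right-invariant vector fields on $G$.

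One slip to fix: your description of $R$ as $\varprojlim (V/V_{\le m})^{*}$, ``the functionals vanishing on some $V_{\le m}$'', is not what you want; that inverse system collapses to $0$. The correct object is the full linear dual $R=V^{*}=\varprojlim (V_{\le m})^{*}$, with $\mathfrak m$-adic filtration given by $\mathfrak m^{k+1}=(V_{\le k})^{\perp}$. With this correction your identification $R\cong\mathbb C[[x_1,\dots,x_n]]$ via PBW, the fact that $\phi(X)=-L_X^{*}$ is a continuous derivation, the computation $\phi^{-1}(\text{isotropy})=\mathfrak g\cap U(\mathfrak g)\mathfrak h=\mathfrak h$, and the maximality argument for $\ker\phi$ all go through as you wrote them.
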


In particular, $\phi$ is an embedding if, and only if  $(\mathfrak g,\mathfrak h)$ is an effective pair.

\subsection{The case $\mathfrak g$ has finite dimension}\label{S:GSfinite}
When $\mathfrak g$ has finite dimension, one can easily describe the construction of $\phi$ as follows.
Let $G$ be a Lie group with Lie algebra $\mathfrak g$. The isotropy subalgebra $\mathfrak h$ is the Lie algebra of
a (not necessarily closed) subgroup $H$ of $G$.
The orbits of the right action of $H$ on $G$ define a smooth foliation $\mathcal H$.
When $H$ is a closed subgroup of $G$, then the leaf space of $\mathcal H$
is naturally identified with the space of left $H$-cosets $G/H$.
Notice that the left action of $G$ on itself preserves the foliation $\mathcal H$.

Restrict now to a sufficiently small neighborhood $U$ of the identity in $G$. Since the foliation $\mathcal H$ is smooth,
we can consider the local leaf space of $\mathcal H$, well defined as a Hausdorff manifold of dimension $n=\textrm{dim}\ \mathfrak g -\textrm{dim}\ \mathfrak h$. Denote it by $U/ \mathcal H$. The Lie algebra of right invariant vector fields
on $G$ descends to a Lie algebra of vector fields on $U/\mathcal H$. Let us denote its image by $\mathfrak g_{{\rm right}}$.
We can identify the germ of $U/ \mathcal H$ at the identity with $(\mathbb C^n,0)$, and $\phi$ is the natural
morphism $\phi:\mathfrak g\to\mathfrak g_{{\rm right}}\subset\VF(U/ \mathcal H,\id)$ induced by the above construction.

We note that an element $g_0\in G$ sufficiently close to the identity will act trivially by left translations on $U/\mathcal H$ if,
and only if, for all $g\in G$ close enough to the identity, we have
\[
    gH=g_0 gH \ \left(=g\underbrace{(g^{-1}g_0g)}_{\in H} H\right)
\]
meaning that $g^{-1}g_0g$ belongs to $H$ for all $g$, i.e.,  that $g_0$ belongs to the intersection
\[
    K:=\bigcap_{g\in G} gHg^{-1}
\]
which is the largest normal subgroup in $G$ contained in $H$. Its Lie algebra $\mathfrak k=\mathrm{Lie}(K)$
is the largest $\mathfrak g$-ideal  contained in $\mathfrak h$.

\subsection{Relationship between invariant foliations  and intermediate subalgebras}\label{SS:corrfolal}
We follow the exposition given in \cite[pp. 3-4]{MR756031} from where we borrow notations.

Let $\mathfrak g\subset \VFR_n$ be a transitive Lie algebra of formal vector fields. By  a \textit{ codimension $m$ formal regular foliation}, we mean a subset  $\F$  of $\VFR_n$ which is both a Lie subalgebra and a free  $\C[[x_1,\ldots,x_n]]$-module of rank $n-m$ such that
\[
    T_0\F:= \{v(0)\arrowvert v\in \F \}
\]
has dimension $n-m$ over $\C$.

According to Frobenius theorem,  there exists a formal change of coordinates
\[
    (x_1,\ldots,x_n)\mapsto (\underbrace{ y_1,\ldots,y_m}_{:=y},\underbrace{ z_1,\ldots,z_{ n-m}}_{:=z})
\]
such that the foliation $\F$ is  generated over $\C[[x_1,\ldots,x_n]]$ by the vector fields $\dfrac{\partial}{\partial z_i}$
and $\mathcal P=\C[[y]]$ is the ring of first integrals of $\F$, i.e., 
$$
    \mathcal P=\{ f\in \C[[x_1,\ldots,x_n]]\arrowvert v(f)=0\ \textrm{for every}\ v\in\F\}
$$ 
which is canonically attached to $\F$.

Of course, one can define \textit{ a germ of regular convergent foliation} $\mathcal G$ along the same way, replacing $\VFR$ by $\VF$, $\C[[x_1,\ldots,x_n]]$ by the ring of convergent power series  $\C\{ x_1,\ldots,x_n\}$ and so on.

Suppose that $\F$ is now preserved by $\mathfrak g$,   i.e., $[\mathfrak g, \F]\subset \F$. The ring  $\mathcal P$ is then preserved under the action of $\mathfrak g$ by derivations. In terms of the previous coordinates, each element of $\mathfrak g$ expresses as
\[
    \xi=\sum_{j=1}^m \xi_j(y)\dfrac{\partial}{\partial y_j}+  \sum_{j=1}^{ n-m}\xi_{m+j}(y,z)\dfrac{\partial}{\partial z_j} \, .
\]
The vector fields of $\mathfrak g$ for which $\xi_1 (0)=\ldots=\xi_m (0)=0$ determine a Lie subalgebra  of $\mathfrak g$, namely the unique subalgebra $\mathfrak l\supset \mathfrak h$ such that $\mathfrak l (0)=T_0\F$. Note that one inherits a natural morphism of Lie algebras
\begin{align*}
    \varphi:	\mathfrak{l}  &\longrightarrow \VFR_{n-m}=\bigoplus_j\C[[z_1,\ldots,z_{n-m}]]\dfrac{\partial}{\partial z_j} \\
    \xi  & \mapsto  \sum_{j=1}^ { n-m}\xi_{m+j}(0,z)\dfrac{\partial}{\partial z_j}
\end{align*}
whose image is nothing but the restriction of $\mathfrak l$ to the leaf through $0$ defined by $\{y=0\}$. Identifying  $\VFR_{n-m}$ with a subalgebra of $\VFR_n$ (depending on the choice of $z$), any element of $\mathfrak g$ takes the form
\[
    \xi=\sum_{j=1}^m \xi_j(y)\dfrac{\partial}{\partial y_j} + \sum_{\alpha \in \N^m} y^\alpha \eta_\alpha
\]
with the usual multi-index notation and $\eta_\alpha = \sum \frac{1}{\alpha !}\frac{\partial ^\alpha \xi_{m+j}}{\partial y^\alpha}(0,z) \frac{\partial}{\partial z_j}\in \VFR_{n-m}$. Moreover, up to rechoosing the coordinates $z_1,\ldots,z_{ n-m}$, one can assume that the $\eta_\alpha$'s lie in $\overline{\varphi(\mathfrak l)}$,  the closure of $\varphi({\mathfrak l})$ with respect to the Krull topology (see  \cite[p.4 and Corollary 2.1]{MR756031} ). In particular, any formal vector field  in $\VFR_{n-m}\subset \VFR_n$ belonging to the centralizer of $\varphi(\mathfrak l)$ in $\VFR_{n-m}$ also belongs to the centralizer of $\mathfrak g$ in  $\VFR_n$ (this property will be used in the proof of Proposition \ref{P:Cg}).

It turns out that this process can be reversed. The correspondence $\F\mapsto { \mathfrak l}(\F)$ defined above between $\mathfrak g$-invariant foliations $\F$ and  subalgebra $\mathfrak l$ of $\mathfrak g$ containing $\mathfrak h$ is one to one and onto (see \cite[Theorem 1.3]{MR756031}). Moreover, if $\mathfrak g\subset \VF_n$, and $\mathfrak l={\mathfrak l}(\F)$, then the foliation $\F$ is clearly convergent, i.e $\F= \G\otimes \C[[x_1,\ldots,x_n]]$ where $\G$ is convergent in the sense specified above. Indeed, let $E\subset \mathfrak g$ a $n$-dimensional complementary subspace to  $\mathfrak h$. Because of the $\mathfrak g$ invariance $[\mathfrak g,\F]\subset \F$, $\F$ necessarily coincides with the germ of distribution obtained by propagating the vector subspace $\mathfrak l(0)$ from the origin to a full neighborhood using the local flows of vector fields $v\in E$. Or, if one prefers, $\F$ is given by parallel transport of $\mathfrak l(0)$ along the germs of analytic curves $\exp{tv}(0)$ with respect to the unique connection $\nabla$ on $\VF_n$, such that $\nabla_v X=[v,X]$.

\subsection{Relationship between $N_{\mathfrak g}(\mathfrak h)$, $C_\VF(\mathfrak g)$ and zeroes of $\mathfrak h$.}\label{S:NghZeroh}
In this subsection, we will consider the case of primitive pairs $(\mathfrak g,\mathfrak h)$, i.e., there is no intermediate Lie algebra between $\mathfrak h$ and $\mathfrak g$.
Let $I\subset \C[[x_1,\ldots,x_n]]$  the ideal generated by the coefficients of elements of $\mathfrak h$. Denote by $\mathcal M$ the maximal ideal of the local Noetherian ring $A= \C[[x_1,\ldots,x_n]]/I$.  We will use the notation $T_0 (Z(\mathfrak{h}))$  for the Zariski tangent space $ { (\M/ { \M^2})}^*\subset T_0 \C^n$ of the subscheme $Z(\mathfrak{h})=\mathrm{Spec} A$ at the closed point.

An obstruction to primitiveness is given by $N_{\mathfrak g}(\mathfrak h)$, which can be strictly larger than $\mathfrak h$. The following lemma characterizes this case.

\begin{lemma}\label{L:NghZeroh}
    Let $\mathfrak g\subset \VFR_n$ be a transitive Lie subalgebra
    and $\mathfrak h\subset\mathfrak g$ be the isotropy Lie subalgebra of $\mathfrak g$ at $0$.
    The  schematic zero locus $Z(\mathfrak h)$ of $\mathfrak h$ is smooth at the closed point, and is the leaf through the origin of a regular formal $\mathfrak g$-invariant foliation $\mathcal Z$ on $(\mathbb C^n,0)$. This foliation is convergent whenever $\mathfrak g \subset \VF_n$. Moreover, the tangent space $T_0\ Z(\mathfrak{h})$ of $Z(\mathfrak h)$ at $0$ is given by the exact sequence of vector spaces
    \[
        0\to \mathfrak h \to N_{\mathfrak g}(\mathfrak h) \xrightarrow{ev_1} T_0\ Z(\mathfrak{h})=T_0\mathcal Z \to 0
    \]
    where the right arrow is evaluation $ev_1:v\mapsto v(0)$. In particular, $N_{\mathfrak g}(\mathfrak h)=\mathfrak h$
    if, and only if, $\mathfrak h$ has isolated zero set.
\end{lemma}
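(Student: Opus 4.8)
The plan is to realize $Z(\mathfrak h)$ as the leaf through the origin of the $\mathfrak g$-invariant foliation attached by the correspondence of Subsection~\ref{SS:corrfolal} to the intermediate subalgebra $N_{\mathfrak g}(\mathfrak h)$, and to extract smoothness from a dimension count. First I would set up the linear algebra behind the exact sequence. For $w\in\mathfrak h$ and $v\in\mathfrak g$, the vanishing $w(0)=0$ gives $[v,w](0)=L_w(v(0))$, where $L_w\in\End(T_0\C^n)$ is the linear part of $w$ at the origin. Since the schematic zero locus $Z(\mathfrak h)$ is cut out by the coefficients of the elements of $\mathfrak h$ (all of which vanish at $0$), its Zariski tangent space is $T_0 Z(\mathfrak h)=\bigcap_{w\in\mathfrak h}\ker L_w$. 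Consequently $v\in N_{\mathfrak g}(\mathfrak h)$ if and only if $L_w(v(0))=0$ for every $w$, that is, if and only if $v(0)\in T_0 Z(\mathfrak h)$. The kernel of the evaluation $v\mapsto v(0)$ restricted to $N_{\mathfrak g}(\mathfrak h)$ is exactly $\mathfrak h$, and transitivity of $\mathfrak g$ makes this evaluation surjective onto $T_0 Z(\mathfrak h)$: any $\xi\in T_0 Z(\mathfrak h)$ lifts to some $v\in\mathfrak g$ with $v(0)=\xi$, and such a $v$ automatically lies in $N_{\mathfrak g}(\mathfrak h)$. This already yields the exact sequence, hence the equivalence $N_{\mathfrak g}(\mathfrak h)=\mathfrak h$ if and only if $T_0 Z(\mathfrak h)=0$.

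Next I would produce $\mathcal Z$. As $\mathfrak h\subset N_{\mathfrak g}(\mathfrak h)\subset\mathfrak g$ and the normalizer is a subalgebra, the correspondence of Subsection~\ref{SS:corrfolal} provides a regular formal $\mathfrak g$-invariant foliation $\mathcal Z$ with $T_0\mathcal Z=N_{\mathfrak g}(\mathfrak h)(0)=T_0 Z(\mathfrak h)$, convergent whenever $\mathfrak g\subset\VF_n$. Write $d=\dim T_0 Z(\mathfrak h)$, let $L_0$ be the smooth leaf of $\mathcal Z$ through $0$, and let $\varphi\colon N_{\mathfrak g}(\mathfrak h)\to\VFR_d$ be the restriction-to-$L_0$ morphism of that subsection; its image is a transitive Lie algebra on $L_0$ whose isotropy at $0$ contains $\varphi(\mathfrak h)$, since $\varphi(w)(0)=w(0)=0$ for $w\in\mathfrak h$.

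The hard part is to show that $\mathfrak h$ vanishes identically along $L_0$. Here I would use that $\mathfrak h$ is, by the very definition of the normalizer, an ideal of $N_{\mathfrak g}(\mathfrak h)$; pushing it through the surjective Lie morphism $\varphi$ shows that $\varphi(\mathfrak h)$ is an ideal of the transitive algebra $\varphi(N_{\mathfrak g}(\mathfrak h))$. Since $\varphi(\mathfrak h)$ sits inside the isotropy at $0$ of this transitive algebra, Lemma~\ref{L:hNoIdeal} forces $\varphi(\mathfrak h)=0$. Thus every $w\in\mathfrak h$ restricts to the zero vector field on $L_0$, so that all its coefficients lie in the ideal of $L_0$ and $L_0\subset Z(\mathfrak h)$. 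I expect this ideal-freeness argument to be the crux of the proof.

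Finally I would conclude by a dimension count. The inclusion $L_0\subset Z(\mathfrak h)$ makes the local ring of $Z(\mathfrak h)$ at $0$ surject onto that of the $d$-dimensional smooth germ $L_0$, whence $\dim_0 Z(\mathfrak h)\ge d$; on the other hand its embedding dimension equals $\dim T_0 Z(\mathfrak h)=d$. As Krull dimension never exceeds embedding dimension, both equal $d$ and the local ring is regular, hence a domain: $Z(\mathfrak h)$ is smooth and irreducible of dimension $d$ at $0$. Being reduced and containing the $d$-dimensional irreducible germ $L_0$, it must coincide with $L_0=\mathcal Z_0$, which gives the schematic equality $Z(\mathfrak h)=L_0$ together with $T_0 Z(\mathfrak h)=T_0\mathcal Z$. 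The last assertion is then immediate from the exact sequence, smoothness turning the condition $T_0 Z(\mathfrak h)=0$ into the statement that $\mathfrak h$ has an isolated zero set.
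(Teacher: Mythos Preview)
Your argument is correct and complete; the exact sequence, the inclusion $L_0\subset Z(\mathfrak h)$, and the dimension count all go through as written. The route, however, differs from the paper's. For the exact sequence both proofs do essentially the same thing (your computation $[v,w](0)=L_w(v(0))$ is the infinitesimal version of the paper's equivalence $v\in N_{\mathfrak g}(\mathfrak h)\Leftrightarrow v(I)\subset I\Leftrightarrow v(f)(0)=0$ for $f\in I$). The real divergence is in how smoothness is obtained. The paper proves regularity of $A=\mathbb C[[x]]/I$ directly, by showing that the derivations on $\mathrm{Gr}_{\mathcal M}(A)$ induced by $v_1,\dots,v_d\in N_{\mathfrak g}(\mathfrak h)$ force $\mathrm{Gr}_{\mathcal M}(A)\cong\mathbb C[X_1,\dots,X_d]$; only afterwards does it identify $Z(\mathfrak h)$ with the leaf of $\mathcal Z$. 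You instead build the leaf $L_0$ first, push $\mathfrak h$ through the restriction morphism, and invoke Lemma~\ref{L:hNoIdeal} on the transitive algebra $\varphi(N_{\mathfrak g}(\mathfrak h))\subset\VFR_d$ to kill $\varphi(\mathfrak h)$; the inclusion $L_0\subset Z(\mathfrak h)$ then turns smoothness into a one-line comparison of Krull and embedding dimensions. Your approach trades the explicit graded-ring calculation for a second use of the ideal-freeness lemma, which is arguably cleaner and makes the geometric picture (``$\mathfrak h$ vanishes along its own orbit under $N_{\mathfrak g}(\mathfrak h)$'') more transparent; the paper's approach has the advantage of being self-contained and not needing the foliation $\mathcal Z$ until after smoothness is already established.
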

\begin{proof}
    Let $v\in \mathfrak g$. From the Lie algebra structure, the following properties are obviously equivalent:
    \begin{enumerate}
        \item $v\in N_{\mathfrak g}(\mathfrak h)$
        \item $v(I)	\subset I$
        \item For all $f\in I$, $v(f)(0)=0$.
    \end{enumerate}
    If one combines these observations with the transitivity of $\mathfrak g$, then one obtains the exact sequence
    \[
        0\to \mathfrak h \to N_{\mathfrak g}(\mathfrak h) \to T_0 Z(\mathfrak{h}) \to 0
    \]

    It remains to show that $Z(\mathfrak{h})$ is smooth, or in other words, that $A$ is regular. By classical properties of Noetherian local rings, this amounts to prove that the associated graded ring $\mathrm{Gr}_\M (A)=\bigoplus_{i=0}^{+\infty} \M^i/{\M}^{i+1}$ is isomorphic to the $\C$-algebra $\C[X_1,\ldots,X_d]$ of polynomials with $d$ indeterminates where $d$ is the dimension of the $\C$ linear space $T_0   Z(\mathfrak{h})$. To do this, note firstly that any vector field $v\in \VFR_n$ such that $v(I)\subset I$ induces a derivation on $A$, hence a derivation $D$ on $\mathrm{Gr}_\M (A)$ which is a $\C$-linear operator of degree $-1$ since
    \[
        D( \M^i/{\M}^{i+1})\subset \M^{ i-1}/{\M}^{i}.
    \]
    Now, let us choose vector fields $v_i\in N_{\mathfrak g}(\mathfrak h), i=1, \ldots,d$  such that $(v_i(0), i=1,\ldots,d)$ form a basis  of $T_0(Z(\mathfrak{h}))$.   Up to performing a linear change of coordinates in $\C[[x_1,\ldots,x_n]]$, one can assume that $v_i(x_j)(0)=\delta_{ij}$, $i=1,\ldots,d$.  Nakayama's Lemma implies that the $x_j$'s modulo $I$ generate $\M$.  Let $D_i$ be the associate derivation of $v_i$ on $\mathrm{Gr}_\M (A)$ (well defined according to the three equivalent properties listed above). Observe that $D_i(x_j)=\delta_{ij}$, where the $x_j, j=1, \ldots,d$ are regarded as homogeneous elements of degree $1$ in  $\mathrm{Gr}_\M (A)$. Consequently, the natural surjective morphism of graded rings $\C[X_1,\ldots,X_d]\mapsto \mathrm{Gr}_\M (A)$ induced by the evaluation  $P\mapsto P(x_1,\ldots,x_d)$ is indeed an isomorphism. Actually, it is an isomorphism of differential rings with respect to the set of derivations $ \{ \frac{\partial}{\partial X_i} \}$ and $\{ D_i \}$. This proves the regularity of $A$, as wanted.

    Because $N_\mathfrak{g}(\mathfrak{h}) \cdot I\subset I$ and from the correspondance foliation-subalgebra recalled in Subsection \ref{SS:corrfolal}, note also that $Z(\mathfrak h)$ is nothing but the leaf through $0$ of the foliation $\mathcal Z$ associated to $N_\mathfrak{g}(\mathfrak{h})$. In more geometric terms, $Z(\mathfrak h)$ is the "orbit" of $N_\mathfrak{g}(\mathfrak{h})$ through the origin. According to Subsection \ref{SS:corrfolal}, $\mathcal Z$ is convergent if $\mathfrak g$ is so and the foliation $\mathcal Z$  is obtained by translation of this orbit as a particular case, with $\mathfrak l= N_\mathfrak{g}(\mathfrak{h})$, of the $\mathfrak g$ invariant germ of  foliation constructed in Subsection \ref{SS:Prim}.
\end{proof}

We will denote by
\[
    C_{ \VFR_n}(\mathfrak g):=\{ v \in \VF \, \vert \, [v,\mathfrak g]=0 \}
\]
the centralizer of $\mathfrak g$ in $\VFR_n$.

\begin{lemma}\label{L:BoundCentralizer}
    Let $\mathfrak g\subset \VFR_n$ be transitive
    and $\mathfrak h\subset\mathfrak g$ be its isotropy Lie subalgebra at $0$.
    Then the evaluation map defines an embedding:
    \begin{align*}
        ev_2:C_{ \VFR_n}(\mathfrak g) &\hookrightarrow T_0\mathcal Z  \\
        v & \mapsto v(0)
    \end{align*}
    where $\mathcal Z$ is as in Lemma \ref{L:NghZeroh}. In particular,
    if $N_{\mathfrak g}(\mathfrak h)=\mathfrak h$, or equivalently $\mathfrak h$ has isolated zero set, then $C_{ \VFR_n}(\mathfrak g)$ is trivial (and $\mathfrak g$ is centerless).
\end{lemma}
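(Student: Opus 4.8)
The plan is to prove two things about the linear map $ev_2$: first, that its image actually lies in $T_0\mathcal Z$, and second, that it is injective. The ``In particular'' clause then follows at once: by Lemma \ref{L:NghZeroh} the hypothesis $N_{\mathfrak g}(\mathfrak h)=\mathfrak h$ is equivalent to $Z(\mathfrak h)$ being an isolated (reduced) point, so $T_0\mathcal Z = T_0 Z(\mathfrak h)=0$; an injection into the zero space has trivial source, whence $C_{\VFR_n}(\mathfrak g)=0$, and since the center of $\mathfrak g$ is contained in $C_{\VFR_n}(\mathfrak g)$, the algebra $\mathfrak g$ is centerless.

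For injectivity I would reuse the order-lowering mechanism of Lemma \ref{L:hNoIdeal}, noting that it applies to \emph{any} formal field of positive order, not only to elements of $\mathfrak g$. Indeed, if $w\in\VFR_n$ has $\mathrm{ord}(w)=m\ge1$ with leading part $w^{(m)}=\sum_i w_i^{(m)}\partial_{x_i}$ a nonzero homogeneous vector field of degree $m$, then Euler's identity $\sum_j x_j\,\partial_{x_j}w_i^{(m)}=m\,w_i^{(m)}\neq0$ shows that some $\partial_{x_j}w_i^{(m)}$ is nonzero; hence one can choose a constant direction $c$ with $[\,\sum_j c_j\partial_{x_j},\,w^{(m)}\,]\neq0$, and transitivity provides $u\in\mathfrak g$ with $u(0)=c$, so that the order-$(m-1)$ part of $[u,w]$ is this nonzero bracket and $\mathrm{ord}([u,w])=m-1$. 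Now take $v\in C_{\VFR_n}(\mathfrak g)$ with $v(0)=0$; if $v\neq0$ then $\mathrm{ord}(v)\ge1$, so there is $u\in\mathfrak g$ with $\mathrm{ord}([u,v])=\mathrm{ord}(v)-1<\infty$, i.e. $[u,v]\neq0$, contradicting $v\in C_{\VFR_n}(\mathfrak g)$. Thus $v=0$.

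For the image landing in $T_0\mathcal Z$, recall from Lemma \ref{L:NghZeroh} that $T_0\mathcal Z=T_0 Z(\mathfrak h)$ is the annihilator in $T_0\mathbb C^n$ of the ideal $I\subset\C[[x]]$ generated by the coefficients of the elements of $\mathfrak h$; concretely, a tangent vector $\xi$ lies in $T_0 Z(\mathfrak h)$ iff $\xi(f)=0$ for every $f\in I$. Given $v=\sum_i v_i\partial_{x_i}\in C_{\VFR_n}(\mathfrak g)$ and any $w=\sum_j w_j\partial_{x_j}\in\mathfrak h$, the relation $[v,w]=0$ reads coefficientwise as $v(w_j)=w(v_j)$ for all $j$. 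Evaluating at the origin and using that $w\in\mathfrak h$ vanishes there, i.e. $w_i(0)=0$, gives $(w(v_j))(0)=\sum_i w_i(0)\,(\partial_{x_i}v_j)(0)=0$, hence $(v(w_j))(0)=0$. Since these coefficients $w_j$ generate $I$ and all lie in the maximal ideal, the Leibniz rule propagates this vanishing to every $f\in I$ (each product term carries a factor $w_j(0)=0$ or $(v(w_j))(0)=0$), so $v(0)$ annihilates $I$ and therefore $v(0)\in T_0 Z(\mathfrak h)=T_0\mathcal Z$.

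The step requiring the most care, and the one I expect to be the main obstacle, is making the order-lowering argument precise for a field $v$ that a priori lies outside $\mathfrak g$. The point to stress is that Lemma \ref{L:hNoIdeal} uses transitivity only to manufacture the \emph{lowering} field $u\in\mathfrak g$, while the field whose order is dropped may be arbitrary; and one must check that the leading term of $[u,v]$ genuinely has order $\mathrm{ord}(v)-1$ rather than higher, which is exactly what the Euler-identity computation of the preceding paragraph guarantees for a suitable choice of $u(0)$.
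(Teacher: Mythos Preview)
Your proof is correct and follows essentially the same approach as the paper's: for the image you show that an element of the centralizer preserves the ideal $I$ (you via a direct coefficient computation from $[v,w]=0$, the paper by the one-line observation that commuting with $\mathfrak h$ preserves it and hence its zero set), and for injectivity both arguments are the identical order-lowering trick via transitivity, with your Euler-identity paragraph merely making explicit why the order genuinely drops. The content is the same; your version is simply more detailed.
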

\begin{proof}
    As in the proof of Lemma \ref{L:NghZeroh}, the action of $C_{ \VFR_n}(\mathfrak g)$ must preserve
    $\mathfrak h$, as it commutes with it. Therefore, the action of $C_{ \VFR_n}(\mathfrak g)$ must also preserve the zero set of $\mathfrak h$.
    We deduce that the evaluation map sends $C_{ \VFR_n}(\mathfrak g)$ into $T_0\mathcal Z$. It remains to show that its kernel is trivial.
    For this, consider an element $w\in  C_{ \VFR_n}(\mathfrak g)$ such that $w(0)=0$. Assume for a while that $w$ does not vanish identically. Consider the positive integer $\mathrm{ord } (w)$ as defined in Lemma \ref{L:hNoIdeal}. By transitivity, there exists $v\in \mathfrak g$ such that $\mathrm{ord }([v,w])= \mathrm{ord }(w)-1$. On the other hand $[v,w] =0$: absurd.
\end{proof}

\subsection{Centralizers of transitive Lie algebras of formal vector fields}
Our next result shows that the map of Lemma \ref{L:BoundCentralizer} is an isomorphism. In this way, we
obtain a description of centralizers of transitive Lie algebras of formal vector fields.

\begin{prop}\label{P:Cg}
    Let $\mathfrak g \subset \VFR_n$ be a transitive subalgebra
    and $\mathfrak h\subset\mathfrak g$ be the isotropy Lie subalgebra at $0$.
    Then we have isomorphisms
    \[
        C_{\VFR_n}(\mathfrak g) \simeq T_0\mathcal Z\simeq \frac{N_{\mathfrak g}(\mathfrak h)}{\mathfrak h} \, .
    \]
    Moreover,  $C_{\VFR_n}(\mathfrak g)\subset\VF_n$ if $ \mathfrak g\subset \VF_n$.
\end{prop}
\begin{proof}
    Because of Lemmas \ref{L:NghZeroh} and \ref{L:BoundCentralizer}, it only remains to prove
    that $\frac{N_{\mathfrak g}(\mathfrak h)}{\mathfrak h}$ can be embedded in $C_{\VFR_n}(\mathfrak g)$
    (then dimensions will coincide, making the morphism of Lemma \ref{L:BoundCentralizer} an isomorphism).
    In view of this, let us first assume that $\mathfrak g$ is finite-dimensional. According to Theorem \ref{T:GuilleminSternberg}, we can suppose that $\mathfrak g$ is convergent and its local action 
    on $( \C^n,0)$ is given in Section  \ref{S:GSfinite}:  the right action of $H$ defines a foliation $\mathcal H$, and
    the infinitesimal action of $\mathfrak g$ on $(\mathbb C^n,0)$ identifies with the action of
    $\mathfrak g_{{\rm right}}$ on the quotient $U/ \mathcal H$ of a neighborhood $U$ of the identity in $G$.

    The right action of $G$ on itself does not preserve the foliation $\mathcal H$ in general.
    If an element $g \in G$  maps left $H$-cosets to left cosets through right multiplication, then it must map the coset $g H$ to $H$ since the set $g H g^{-1}$ contains the identity.
    It follows that the set of  elements in $G$ preserving $\mathcal H$ through right multiplication
    is formed precisely by $N_G(H)$  the normalizer of $H$ in $G$.

    From the infinitesimal point of view, left-invariant vector fields are not, in general, infinitesimal automorphisms
    of $\mathcal H$. Only those in the normalizer of $\mathfrak h$ in $\mathfrak g$ will be infinitesimal automorphisms and will
    descend to the leaf space $U/\mathcal H$.  The left-invariant vector fields in $\mathfrak h$ descend to zero, and the quotient
    $N_\mathfrak{g}(\mathfrak h)/{\mathfrak h}$ injects into the centralizer of $\mathfrak g_{{\rm right}}$ on $U/\mathcal H$.

    This conclusion still holds in general. Indeed, from Lemma \ref{L:NghZeroh},   one remarks that $\mathfrak h$ is the kernel of the surjective Lie algebra morphism induced by restriction: $N_{\mathfrak g}(\mathfrak h)\mapsto {N_{\mathfrak g}(\mathfrak h)}_{|Z(\mathfrak h)}$. In particular,  $\tilde N:={N_{\mathfrak g}(\mathfrak h)}_{|Z(\mathfrak h)}$ is transitive with trivial isotropy. Indeed, $Z(\mathfrak h)$ coincides with the leaf through the origin of the foliation associated to $ N_{\mathfrak g}(\mathfrak h)$ according to the observations made in the proof of Lemma \ref{L:NghZeroh} and Subsection \ref{SS:corrfolal}.
    By Theorem \ref{T:GuilleminSternberg}, $\tilde N$ is formally conjugated to the Lie algebra of right invariant vector fields of a Lie group $G$ near $e_G$. Its centralizer $C(\tilde N)$ in the Lie algebra of formal vector fields tangent to $Z(\mathfrak h)$  thus corresponds by this isomorphism to the left-invariant vector fields on $(G,e_G)$ and then is a transitive subalgebra (of the Lie algebra of formal vector fields on $Z(\mathfrak h)$)  with trivial isotropy, isomorphic as a Lie algebra to  $\tilde N$. According to the remarks given in Subsection \ref{SS:corrfolal}, each element of $C(\tilde N)$ extend as a vector field lying in $\VFR_n$ and commuting with $\mathfrak g$. One obtains in this way the sought isomorphism. As a result of the above description, and according to the classical relationship between right-left invariant vector fields on a Lie group $G$ (namely the structure of opposite Lie algebras induced on $T_e G$), we see that one inherits a natural isomorphism of Lie algebra
    \begin{align*}
        \frac{N_{\mathfrak g}(\mathfrak h)}{\mathfrak h}  &\longrightarrow C_{\VFR_n}(\mathfrak g) \\
        v  & \mapsto  -{ev_2}^{-1}(v (0))
    \end{align*}

    Retaining the notations and arguments of Subsection \ref{SS:corrfolal}, one can  conclude too that  $C_{\VFR_n}(\mathfrak g)$ is analytic  if  $\mathfrak g$ is so.
\end{proof}

\subsection{Complete Lie algebra}\label{S:Complete}
Recall some basic notions of \cite[Chapter I, Sections 2-3]{MR559927}.
A derivation of a Lie algebra $\mathfrak g$ is a morphism $\partial:\mathfrak g\to \mathfrak g$
of $\mathbb C$-vector spaces satisfying the Leibniz rule $\partial[v,w]=[\partial v,w]+[v,\partial w]$. The set
$\mathrm{Der}(\mathfrak g)$ of derivations forms a Lie algebra with respect to the Lie bracket $\{\partial_1,\partial_2\}=\partial_1\partial_2-\partial_2\partial_1$.

A natural example of derivation of a Lie algebra $\mathfrak g$ is provided by the adjoint action of elements of $\mathfrak g$:
\begin{align*}
    \mathrm{ad}_v:\mathfrak g & \to\mathfrak g \\
    w & \mapsto [v,w].
\end{align*}
Jacobi identity for $\mathfrak g$ implies the Leibniz rule
$$
    \mathrm{ad}_v[w_1,w_2]=[\mathrm{ad}_vw_1,w_2]+[w_1,\mathrm{ad}_vw_2]
$$
and also the identity
$$
    \{\mathrm{ad}_{v_1},\mathrm{ad}_{v_2}\}
    =\mathrm{ad}_{v_1}\mathrm{ad}_{v_2}-\mathrm{ad}_{v_2}\mathrm{ad}_{v_1}    =\mathrm{ad}_{[v_1,v_2]}.
$$
We, therefore, have an exact sequence
$$
    0 \to C_{\mathfrak g}(\mathfrak g) \to \mathfrak g\stackrel{\mathrm{ad}}{\longrightarrow} \mathrm{Der}(\mathfrak g)
$$
where $C_{\mathfrak g}(\mathfrak g)$ is the center of $\mathfrak{g}$.
The image in $\mathrm{Der}(\mathfrak g)$ is the subalgebra of \textit{inner derivations}. We say that $\mathfrak g$ is \textit{complete}
when the morphism $\mathrm{ad}$ above is an isomorphism:
$$
    \mathfrak g\stackrel{\sim}{\longrightarrow} \mathrm{Der}(\mathfrak g)
$$
i.e., when $\mathfrak g$ is centerless ($C_{\mathfrak g}(\mathfrak g) =0 $), and  all its derivations are inner.

When $\mathfrak g\subset\VF_n$, we can have more derivations by considering the normalizer
\[
    N_{\VF_n}(\mathfrak g)  = \{ v \in \VF_n \, \vert \, [v,\mathfrak g]\subset \mathfrak g \}.
\]
Indeed, $\mathfrak g$ is stabilized by the adjoint action of $N_{\VF_n}(\mathfrak g)$ on $\VF_n$, and we have

\begin{lemma}\label{L:exact}
    Let $\mathfrak g \subset \VF_n$ be a Lie algebra of germs of analytic vector fields. Then, the normalizer $N_{\VF_n}(\mathfrak g)$ fits
    into the exact sequence
    \[
        0 \to C_{\VF_n}(\mathfrak g) \longrightarrow N_{\VF_n}(\mathfrak g) \stackrel{\mathrm{ad}}{\longrightarrow} \Der(\mathfrak g)
    \]
    where $C_{\VF_n}(\mathfrak g) = \{ v \in \VF_n \, \vert \, [v,\mathfrak g]=0\}$ is the centralizer of $\mathfrak g$ in $\VF_n$,
    and $\Der(\mathfrak g)$ is the  algebra of derivations of $\mathfrak g$.
\end{lemma}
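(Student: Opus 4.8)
The plan is to verify the three facts that together constitute exactness of the displayed three-term sequence: that $\mathrm{ad}$ carries $N_{\VF_n}(\mathfrak g)$ into $\Der(\mathfrak g)$, that the inclusion $C_{\VF_n}(\mathfrak g)\hookrightarrow N_{\VF_n}(\mathfrak g)$ is well defined and injective, and that its image is precisely the kernel of $\mathrm{ad}$. All three are direct consequences of the Jacobi identity in $\VF_n$ together with the definitions of normalizer and centralizer, so I expect no serious obstacle; the only point requiring attention is to invoke the defining property of the normalizer at exactly the right moment, so that $\mathrm{ad}_v$ is guaranteed to preserve $\mathfrak g$.

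First I would show the map is well defined. If $v\in N_{\VF_n}(\mathfrak g)$, then by definition $[v,w]\in\mathfrak g$ for every $w\in\mathfrak g$, so $\mathrm{ad}_v$ restricts to a $\mathbb C$-linear endomorphism of $\mathfrak g$. To see that this endomorphism is a \emph{derivation} of $\mathfrak g$, I would invoke the Jacobi identity in $\VF_n$, exactly as in the computation recalled just above Lemma \ref{L:exact}: for $w_1,w_2\in\mathfrak g$,
\[
    \mathrm{ad}_v[w_1,w_2]=[v,[w_1,w_2]]=[[v,w_1],w_2]+[w_1,[v,w_2]]=[\mathrm{ad}_vw_1,w_2]+[w_1,\mathrm{ad}_vw_2],
\]
which is precisely the Leibniz rule. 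Hence $\mathrm{ad}_v|_{\mathfrak g}\in\Der(\mathfrak g)$, and $\mathrm{ad}\colon N_{\VF_n}(\mathfrak g)\to\Der(\mathfrak g)$ is a well-defined $\mathbb C$-linear map.

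Next I would establish exactness. The inclusion $C_{\VF_n}(\mathfrak g)\subset N_{\VF_n}(\mathfrak g)$ holds because any $v$ commuting with $\mathfrak g$ satisfies $[v,\mathfrak g]=0\subset\mathfrak g$, and this inclusion is injective tautologically. Finally, an element $v\in N_{\VF_n}(\mathfrak g)$ lies in $\ker(\mathrm{ad})$ exactly when $\mathrm{ad}_v|_{\mathfrak g}=0$, i.e.\ when $[v,w]=0$ for all $w\in\mathfrak g$, which is precisely the condition $v\in C_{\VF_n}(\mathfrak g)$. Thus the image of the inclusion $C_{\VF_n}(\mathfrak g)\hookrightarrow N_{\VF_n}(\mathfrak g)$ coincides with $\ker(\mathrm{ad})$, which gives exactness at both $C_{\VF_n}(\mathfrak g)$ and $N_{\VF_n}(\mathfrak g)$.

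I would deliberately not append a terminal $\to 0$ to the sequence: the map $\mathrm{ad}$ need not surject onto $\Der(\mathfrak g)$, and in fact the failure of surjectivity, together with the triviality of the centralizer, is exactly the phenomenon that the notion of completeness and the hypotheses of Theorem \ref{THM:A} are designed to control. In short, the statement is a formal consequence of the Jacobi identity and the definitions, with no genuine difficulty beyond the bookkeeping above.
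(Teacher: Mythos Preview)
Your proof is correct and follows essentially the same approach as the paper's: verify that $\ad_v$ restricts to a derivation of $\mathfrak g$ whenever $v\in N_{\VF_n}(\mathfrak g)$, and observe that the kernel of $v\mapsto(\ad_v)|_{\mathfrak g}$ is by definition $C_{\VF_n}(\mathfrak g)$. The paper's proof is simply a compressed version of what you wrote, omitting the explicit Jacobi computation and the tautological check that $C_{\VF_n}(\mathfrak g)\subset N_{\VF_n}(\mathfrak g)$.
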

\begin{proof}
    One has just to observe that for any element $v \in N_{\VF_n}(\mathfrak g)$ the restriction of $\ad_v(\cdot) = [v,\cdot]$
    to $\mathfrak g$  is a derivation of $\mathfrak g$ since
    $[v,\mathfrak g] = \ad_v(\mathfrak g) \subset \mathfrak g$. The kernel of the morphism $v \mapsto (\ad_v)_{|\mathfrak g}$ is, by definition, $C_{\VF_n}(\mathfrak g)$.
\end{proof}

Of course, one can rewrite these left exact sequences in the formal setting, replacing $\VF_n$ with $\VFR_n$.

\subsection{The main ingredient for the proof of  Theorem \ref{THM:A}}\label{S:ingredient}
As already mentioned in the Introduction, Theorem \ref{THM:A} is a straightforward consequence of  \cite[Theorem 5.12]{MR3719487}.
If $X$ is a complex manifold and $p \in X$ is a  point, we will denote the Lie algebra of germs of analytic vector fields at $p$ by $\VF(X,p)$ or simply by $\VF$.

\begin{thm}{\cite[Theorem 5.12]{MR3719487}}\label{T:BC}
    Let $X$ be a smooth irreducible complex algebraic variety and let $\mathfrak g$ be a
    finite-dimensional Lie subalgebra of rational vector fields on $X$. For a general $p \in X$, $\VF=\VF(X,p)$ and $\mathfrak h$ is the isotropy subalgebra at $p$. If $\mathfrak g$ is transitive, $N_{\VF}(\mathfrak g) = \mathfrak g $, $C_{\mathfrak g}(\mathfrak g) = 0 $ and $N_{\mathfrak g}(\mathfrak h) = \mathfrak h $,  then there exists a dominant rational map $\bar f: X \dashrightarrow G/H$ such that $\bar f_* \mathfrak g$ coincides with the Lie algebra of fundamental  vector fields on $G/H$.
\end{thm}

This theorem is proved in several steps.
\begin{enumerate}
    \item A general point $p \in X$ and a local analytic chart $(X,p) \to (\mathbb C^n,0)$ are fixed. In this chart $\mathfrak g$ is isomorphic to $\mathfrak g_0 \subset \VF_n$. One considers
        \[
            V= \{\varphi : (\mathbb C^n,0) \to X \textrm{ germs of biholomorphisms. s.t. } \varphi^\ast \mathfrak g = \mathfrak g_0\}
        \]
    Under the hypothesis $\mathfrak g$ is transitive and $N_{\VF}(\mathfrak g) = \mathfrak g $ it is proved that $V$ is an algebraic variety and $\dim V = \dim \mathfrak g$, the evaluation map  $\pi : V \to X ; \varphi \mapsto \varphi(0)$ being dominant.

    \item From the definition of $V$, both Lie algebras $\mathfrak g_0$ and $\mathfrak g$ can be prolonged as $\widetilde{g}_0$ and $\widetilde{g}$, two commuting parallelisms on $V$   ({\it, i.e.} transitive Lie algebra of rational vector fields of dimension equal to $\dim V$). Morevover $\widetilde{h}_0$ generates $\ker d \pi$ and $\widetilde {\mathfrak g}$ is $\pi$-projectable on $\mathfrak g$.

    \item Step (2)  together with $C_{\mathfrak g}(\mathfrak g) = 0 $ implies that $\mathfrak g = {\rm Lie}(G)$, $G$ being an algebraic group and implies the existence of a rational map $f : V \to G$  such that  $f^\ast {\rm Lie}_\text{left}(G) =  \widetilde{\mathfrak g}_0$ and  $f^\ast {\rm Lie}_\text{right}(G) =  \widetilde{\mathfrak g}$ where ${\rm Lie}_\text{left}(G)$ (resp.  ${\rm Lie}_\text{right}(G)$) is the Lie algebra of left (resp. right) invariant vector fields of $G$.

    \item A fiber of $\pi : V \to X$ is the union of orbits of $\widetilde{\mathfrak h}_0$. The image of these orbits by $f$ are orbits by the right translation of a connected algebraic subgroup $H$ of $G$, and $f$ induces a rational map $\bar f : X \to G/H$, which is the sought rational map.
\end{enumerate}
We refer to \cite{MR3719487} for details.

\subsection{Proof of Theorem \ref{THM:A}}
Consider now $\mathfrak g\subset \VF(X,p)$ and $\mathfrak h$ satisfying assumptions of Theorem \ref{THM:A}, that is  $\mathfrak g$ is transitive, complete, with isotropy subalgebra $\mathfrak h$, and $N_{\mathfrak g} (\mathfrak h)=\mathfrak h$.

Since $\mathfrak g$ is complete, we have an isomorphism $\Der(\mathfrak g)\simeq\mathfrak g$ induced by  the adjoint action of $\mathfrak g$.
Moreover, we derive from Lemma \ref{L:exact} the exact sequence
\[
    0 \to C_{\VF}(\mathfrak g) \longrightarrow N_{\VF}(\mathfrak g) \stackrel{\mathrm{ad}}{\longrightarrow} \Der(\mathfrak g)\to 0
\]
as the restriction of $\mathrm{ad}$ to $\mathfrak g\subset N_{\VF}(\mathfrak g)$ is onto. On the other hand,
since $N_{\mathfrak g} (\mathfrak h)=\mathfrak h$, we deduce from Proposition \ref{P:Cg} that $C_{\VF}(\mathfrak g)=\{0\}$; therefore $C_{\mathfrak g}(\mathfrak g)=\{0\}$ and  (from the above sequence)
\[
    N_{\VF}(\mathfrak g)\simeq \Der(\mathfrak g)\simeq \mathfrak g.
\]
We can apply Theorem \ref{T:BC} to conclude the proof of Theorem \ref{THM:A}. \qed

The proof shows that  the hypothesis of Theorems  \ref{THM:A} and \ref{T:BC} differ by the assumption on the outer derivations of the Lie algebra $\mathfrak g$. In Theorem  \ref{T:BC},  $N_{\VF}(\mathfrak g) = \mathfrak g$ is an hypothesis on the pair $(\mathfrak g, \mathfrak h)$ : there is no outer derivations of $\mathfrak g$ given by the Lie bracket with a vector field, in a realisation of the pair. But in Theorem \ref{THM:A}, the completeness of $\mathfrak g$ is an hypothesis on the abstract Lie algebra independently from the realisation as Lie algebra of vector fields.

\section{Primitive Lie algebras}\label{S:primitive}

\subsection{Structure of primitive Lie Algebras [after Morozov]}
Let $\mathfrak g$ be a primitive Lie subalgebra of $\VFR_n$.
Equivalently, invoking Guillemin-Sternberg (see Section \ref{S:GS}),
we obtain an effective pair $(\mathfrak g,\mathfrak h)$, i.e., $\mathfrak h$ has finite codimension $n$,
contains no non-zero ideal and is maximal among proper Lie subalgebras of $\mathfrak g$.
In \cite[Section 4]{MR2902724} the following result by Morozov clarifies the structure of finite-dimensional effective primitive pairs.

\begin{thm}\label{T:Morozov}
    Suppose that $(\mathfrak g, \mathfrak h)$ is an effective primitive pair where $\mathfrak g$ is finite-dimensional.
    Then either $\mathfrak g$ is simple, or else we are in one of the following two situations:
    \begin{enumerate}
        \item $\mathfrak g$ is the direct sum $\mathfrak l \oplus \mathfrak l$  of two isomorphic simple Lie algebras
                and $\mathfrak h$ is the diagonal subalgebra; or
        \item $\mathfrak g$ is the semi-direct product $\mathfrak h \ltimes \mathfrak m$ where  $\mathfrak h$ is the direct sum of a semisimple Lie algebra with a Lie algebra of dimension at most $1$, and $\mathfrak m$ is an irreducible and faithful $\mathfrak h$-module               equipped with trivial Lie bracket: $[\mathfrak m, \mathfrak m] = 0$.
    \end{enumerate}
    Conversely, in the latter two cases, $(\mathfrak g,  \mathfrak h)$ is primitive and effective.
\end{thm}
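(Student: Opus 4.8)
The plan is to run the entire argument on the two structural levers supplied by the hypotheses. Effectiveness says that no nonzero $\mathfrak g$-ideal is contained in $\mathfrak h$, so every time I manufacture a $\mathfrak g$-ideal sitting inside $\mathfrak h$ I may conclude it is zero. Primitiveness says that $\mathfrak h$ is maximal, so whenever $V\subset\mathfrak g$ is a subspace for which $\mathfrak h+V$ happens to be a subalgebra, I may conclude $\mathfrak h+V=\mathfrak h$ or $\mathfrak h+V=\mathfrak g$. I would split the proof according to whether the radical $\mathfrak r$ of $\mathfrak g$ is zero.

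\emph{Semisimple case ($\mathfrak r=0$).} Write $\mathfrak g=\mathfrak g_1\oplus\cdots\oplus\mathfrak g_k$ as a sum of simple ideals. If $k=1$ we are in the ``$\mathfrak g$ simple'' alternative. If $k\ge2$, I first note that each $\mathfrak h+\mathfrak g_i$ is a subalgebra (as $\mathfrak g_i$ is an ideal) strictly larger than $\mathfrak h$ — it cannot equal $\mathfrak h$, since $\mathfrak g_i\subset\mathfrak h$ would be a nonzero ideal inside $\mathfrak h$ — hence $\mathfrak h+\mathfrak g_i=\mathfrak g$ and $\mathfrak h$ projects onto every factor. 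A Goursat-type computation then shows $\mathfrak h\cap\mathfrak g_1$ is an ideal of $\mathfrak g_1$ (using $[\mathfrak g_j,\mathfrak g_1]=0$ for $j\ne1$ together with the surjection), so it is $0$ or $\mathfrak g_1$; effectiveness rules out $\mathfrak g_1$, and by the symmetric argument $\mathfrak h\cap(\mathfrak g_2\oplus\cdots\oplus\mathfrak g_k)$ is an ideal of the complementary factor, hence a sum of simple $\mathfrak g$-ideals inside $\mathfrak h$, hence $0$. Goursat's lemma then forces $\mathfrak h$ to be the graph of an isomorphism $\mathfrak g_1\xrightarrow{\sim}\mathfrak g_2\oplus\cdots\oplus\mathfrak g_k$; since the target must be simple we get $k=2$, $\mathfrak g_1\cong\mathfrak g_2$, and $\mathfrak h$ is the diagonal up to an automorphism of $\mathfrak g$.

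\emph{Non-semisimple case ($\mathfrak r\ne0$).} Choose a minimal nonzero ideal $\mathfrak m$ of $\mathfrak g$ contained in $\mathfrak r$ (one exists, e.g. inside the last nonzero derived term of $\mathfrak r$); minimality plus solvability make $\mathfrak m$ abelian. Since $\mathfrak m\not\subset\mathfrak h$, maximality gives $\mathfrak g=\mathfrak h+\mathfrak m$, and then $\mathfrak h\cap\mathfrak m$ is checked to be a $\mathfrak g$-ideal — here abelianness of $\mathfrak m$ is exactly what kills the bracket $[\mathfrak m,\mathfrak h\cap\mathfrak m]$ — so effectiveness gives $\mathfrak h\cap\mathfrak m=0$ and $\mathfrak g=\mathfrak h\ltimes\mathfrak m$ with $[\mathfrak m,\mathfrak m]=0$. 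Irreducibility of the $\mathfrak h$-module $\mathfrak m$ follows because any submodule $\mathfrak m_0$ makes $\mathfrak h+\mathfrak m_0$ a subalgebra (again using $[\mathfrak m,\mathfrak m]=0$), and faithfulness because the kernel of the action is a $\mathfrak g$-ideal inside $\mathfrak h$. The remaining and most delicate point is the structure of $\mathfrak h$: I would set $\mathfrak a=\mathrm{rad}(\mathfrak h)$ and apply Lie's theorem to the solvable ideal $\mathfrak a$ acting on the irreducible module $\mathfrak m$, so that $\mathfrak a$ acts through a single character $\lambda$ and $\mathfrak m$ is the corresponding weight space. Faithfulness forces $\lambda$ to be injective, hence $\dim\mathfrak a\le1$; and since each $[x,a]$ with $a\in\mathfrak a$ acts as a commutator of operators it has trace zero, so $\lambda$ vanishes on $[\mathfrak h,\mathfrak a]$, whence injectivity of $\lambda$ yields $[\mathfrak h,\mathfrak a]=0$. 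Thus the Levi decomposition $\mathfrak h=\mathfrak s\oplus\mathfrak a$ is a \emph{direct} sum with $\mathfrak s$ semisimple and $\dim\mathfrak a\le1$, which is alternative (2).

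\emph{Converse.} For the final claim I would verify effectiveness and maximality directly in each configuration. In case (1) the only ideals of $\mathfrak l\oplus\mathfrak l$ are $0,\ \mathfrak l\oplus0,\ 0\oplus\mathfrak l,\ \mathfrak g$, none of the middle two lying in the diagonal, and any subalgebra strictly containing the diagonal must contain a nonzero element of $\mathfrak l\oplus0$, which generates (via the diagonal action and simplicity of $\mathfrak l$) all of $\mathfrak l\oplus0$ and hence $\mathfrak g$. In case (2), an ideal of $\mathfrak g$ inside $\mathfrak h$ annihilates $\mathfrak m$ and so vanishes by faithfulness, while a subalgebra strictly containing $\mathfrak h$ meets $\mathfrak m$ nontrivially in an $\mathfrak h$-submodule, which equals $\mathfrak m$ by irreducibility, forcing the subalgebra to be $\mathfrak g$. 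The main obstacle is the structural step for $\mathfrak h$ in the non-semisimple case — extracting both $\dim\mathfrak a\le1$ and the \emph{directness} of $\mathfrak h=\mathfrak s\oplus\mathfrak a$ from the single hypothesis of faithful irreducibility — which is precisely where Lie's theorem and the trace/character argument must be deployed with care.
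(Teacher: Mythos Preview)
The paper does not supply its own proof of this theorem: it is quoted from the literature (Draisma's survey, attributed to Morozov) with a bare citation to \cite[Section~4]{MR2902724} and no argument, so there is nothing in the paper to compare your proposal against.

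Your argument is correct and is in fact the standard proof of Morozov's theorem: split on whether the radical $\mathfrak r$ vanishes; in the semisimple case use effectiveness and maximality together with a Goursat argument to force $k=2$ with $\mathfrak h$ diagonal; in the non-semisimple case extract a minimal (hence abelian) ideal $\mathfrak m\subset\mathfrak r$, obtain $\mathfrak g=\mathfrak h\ltimes\mathfrak m$ from maximality and effectiveness, read off irreducibility and faithfulness of $\mathfrak m$ the same way, and finish by applying Lie's theorem and the trace argument to $\mathrm{rad}(\mathfrak h)$ acting on the irreducible faithful module $\mathfrak m$ to get $\dim\mathrm{rad}(\mathfrak h)\le 1$ and $[\mathfrak h,\mathrm{rad}(\mathfrak h)]=0$. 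One cosmetic remark: to conclude $\mathfrak h\cap\mathfrak g_1\trianglelefteq\mathfrak g_1$ you need that $\mathfrak h$ surjects onto $\mathfrak g_1$, which is $\mathfrak h+(\mathfrak g_2\oplus\cdots\oplus\mathfrak g_k)=\mathfrak g$ rather than $\mathfrak h+\mathfrak g_i=\mathfrak g$; but the identical maximality/effectiveness step applied to the complementary ideal $\mathfrak g_2\oplus\cdots\oplus\mathfrak g_k$ gives this at once, so nothing is actually missing.
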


The primitive Lie algebras fitting the description given by Item (1), respectively Item (2), of Morozov's Theorem, will be
called primitive Lie algebras of diagonal and affine type, respectively.  The primitive Lie algebras with
$\mathfrak g$ simple will be called simple primitive Lie algebras.

\subsection{The case of curves}\label{SS:curves}
The following result is extracted from the proof of  \cite[Theorem 6.5]{MR3842065}.

\begin{prop}\label{P:rationalLie}
    Let $C$ be a projective curve and $\mathfrak g$ a finite-dimensional Lie algebra of rational vector fields on $C$.
    If $\dim \mathfrak g \ge 2$, then there exists a morphism $\varphi: C \to \mathbb P^1$ such that $\mathfrak g$
	coincides with a subalgebra of  the pull-back under $\varphi$ of the Lie algebra of holomorphic vector fields on
    $\mathbb P^1$, i.e., $\mathfrak g \subset \varphi^* \mathfrak{sl}(2, \mathbb C)$.
\end{prop}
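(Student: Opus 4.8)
The plan is to work entirely inside the rank-one $\mathbb{C}(C)$-module of rational vector fields. Fix a nonzero $X_0\in\mathfrak g$ and write $m=\dim_{\mathbb C}\mathfrak g\ge 2$; since the space of rational vector fields on $C$ is one-dimensional over $\mathbb{C}(C)$, every $v\in\mathfrak g$ is uniquely $v=\phi_v X_0$ with $\phi_v\in\mathbb{C}(C)$, and $v\mapsto\phi_v$ identifies $\mathfrak g$ with a finite-dimensional subspace $V\subset\mathbb{C}(C)$ containing $1$. Writing $D:=X_0$ for the associated derivation of $\mathbb{C}(C)$ (so $\ker D=\mathbb{C}$, since a nonzero derivation of a one-variable function field has only constants in its kernel), the bracket becomes $[\phi X_0,\psi X_0]=(\phi\,D\psi-\psi\,D\phi)X_0$. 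Taking $\psi=1$ shows $V$ is $D$-invariant, and $\ker(D|_V)$ corresponds to the centralizer of $X_0$ in $\mathfrak g$, because $\ad_{X_0}(\phi X_0)=D\phi\,X_0$.

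First I would bound the dimension and locate a good generator using the order filtration at a general point $p$. In a local coordinate $z$ at $p$ put $\mathfrak g^{(k)}=\{v\in\mathfrak g:\ord_p v\ge k\}$. Each quotient $\mathfrak g^{(k)}/\mathfrak g^{(k+1)}$ injects into $\mathbb{C}$ (the coefficient of $z^k\partial_z$), and $\ord_p[v,w]\ge\ord_p v+\ord_p w-1$, so the associated graded is a graded subalgebra of the Witt algebra $\bigoplus_{k\ge 0}\mathbb{C}f_k$, with $[f_i,f_j]=(j-i)f_{i+j-1}$ and $f_k$ in degree $k$. Transitivity (automatic, as $\mathfrak g\ne 0$ on a curve) gives $f_0\in\mathrm{gr}\,\mathfrak g$; since $[f_0,f_k]=k f_{k-1}$ the set of occurring degrees is an interval $\{0,\dots,N\}$, and $[f_2,f_N]=(N-2)f_{N+1}$ forces $N\le 2$, whence $\dim\mathfrak g\le 3$. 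The same graded picture produces an $X_0$ whose adjoint is nilpotent with one-dimensional kernel $\mathbb{C}X_0$: for $N=2$ take a generator of the bottom piece $\mathfrak g^{(2)}$, and for $N=1$ (the non-abelian two-dimensional case, the abelian one being impossible since commuting fields are $\mathbb{C}(C)$-proportional) take a generator of the derived algebra.

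With this choice, $D|_V=(\ad_{X_0})|_{\mathfrak g}$ is nilpotent with $\ker(D|_V)=\mathbb{C}\cdot 1$, hence a single Jordan block; in particular $1$ lies in its image, so there is $\varphi\in V$ with $D\varphi=1$. This $\varphi$ is non-constant and is the sought coordinate. Completing $\varphi$ to a Jordan chain $1,\varphi,v_2,\dots,v_{m-1}$ with $Dv_k=v_{k-1}$ and integrating (each relation $D(v_k-\varphi^k/k!)\in\langle 1,\dots,\varphi^{k-2}\rangle$ is solved inside $\langle 1,\dots,\varphi^{k}\rangle$, up to the additive constant permitted by $\ker D=\mathbb{C}$) yields $V=\langle 1,\varphi,\dots,\varphi^{m-1}\rangle$. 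Since $m\le 3$ we obtain $V\subseteq\langle 1,\varphi,\varphi^2\rangle$, that is $\mathfrak g\subseteq\langle X_0,\varphi X_0,\varphi^2 X_0\rangle$.

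Finally, the rational function $\varphi$ defines a morphism $\varphi:C\to\mathbb{P}^1$ (a non-constant rational function on a smooth projective curve is a morphism; if $C$ is singular one passes to its normalization, which changes neither $\mathbb{C}(C)$ nor $\mathfrak g$). Because $X_0(\varphi)=D\varphi=1$, the fields $X_0,\varphi X_0,\varphi^2 X_0$ are precisely the $\varphi$-relatives of $\partial_s,s\partial_s,s^2\partial_s$ on $\mathbb{P}^1$, so $\langle X_0,\varphi X_0,\varphi^2 X_0\rangle=\varphi^*\mathfrak{sl}(2,\mathbb{C})$ and $\mathfrak g\subseteq\varphi^*\mathfrak{sl}(2,\mathbb{C})$. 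I expect the main obstacle to be not the local classification—which the Witt-algebra computation renders elementary—but the globalization: one must realize the linearizing coordinate as an honest rational function on $C$. The device that makes this work is selecting $X_0$ with nilpotent adjoint, so that $\varphi$ is produced algebraically by solving $D\varphi=1$ within $V$ (equivalently $1\in\mathrm{im}(D|_V)$) rather than through Lie's analytic normal form, the integration step remaining inside $\mathbb{C}(C)$ exactly because $\ker D=\mathbb{C}$.
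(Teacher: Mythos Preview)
Your proof is correct. The paper's argument is shorter because it quotes Lie's classical result that $\dim\mathfrak g\le 3$ and that such a $\mathfrak g$ contains $v_1,v_2$ with $[v_1,v_2]=v_1$; it then sets $\varphi=-v_2/v_1$ and checks by hand that $\varphi^*\partial_z=v_1$ and $\varphi^*(z\,\partial_z)=v_2$, finishing the $\mathfrak{sl}(2)$ case with a one-line bracket argument. Your route replaces the citation by the Witt-filtration computation and replaces the explicit ratio by the Jordan-block analysis of $\ad_{X_0}$ on the coefficient space $V\subset\mathbb C(C)$. The two constructions coincide up to an affine change of coordinate on the target: your $X_0$ is the paper's $v_1$ (the nilpotent element of the pair), and since $[v_1,v_2]=v_1$ reads $D(v_2/v_1)=1$ in your notation, your $\varphi$ equals $v_2/v_1$ modulo a constant. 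What your packaging buys is self-containment (no appeal to Lie's classification) and a transparent reason why the linearizing coordinate is rational: it is produced inside $V\subset\mathbb C(C)$ by solving $D\varphi=1$, rather than by appealing to an analytic normal form. What the paper's version buys is brevity and a single closed formula for $\varphi$ once Lie is granted.
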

\begin{proof}
    A classical result of Lie (cf. \cite{MR2902724}) says that a finite-dimensional Lie subalgebra of
    $\mathbb C(z) \frac{\partial}{\partial z}$ has dimension at most three. Moreover, if its dimension is two then it is
    isomorphic to the affine Lie algebra $\mathfrak{aff}(\mathbb C)$, and if its dimension is three, then
    it its isomorphic to the projective Lie algebra $\mathfrak{sl}(2,\mathbb C)$.
	Therefore $\mathfrak g$ has dimension at most three.
			
    In both cases there exists $v_1, v_2 \in \mathfrak g$ satisfying $[v_1,v_2] = v_1$. Consider the morphism $\varphi : C \to \mathbb P^1$
	defined by the quotient $-\frac{v_2}{v_1}$.
			
	At an arbitrary point of $C$, choose a local analytic coordinate $w$. We can write
	$v_1 =a(w)  \frac{\partial}{\partial w}$,
    $v_2=b(w) \frac{\partial}{\partial w}$, $\varphi(w)= -b(w)/a(w)$ locally.
    On the one hand, the relation $[v_1,v_2]=v_1$ implies $ab' -ba' = a.$
	On the other hand,
	\[
	   \varphi^*  \frac{\partial}{\partial z} = -\frac{1}{(b/a)' (w)}  \frac{\partial}{\partial w} =
            -\frac{a^2}{(a'b-ab')}  \frac{\partial}{\partial w} = a  \frac{\partial}{\partial w} = v_1 \, .
	\]
	Similarly $\varphi^*  z \frac{\partial}{\partial z} = v_2$. This proves the proposition when
    $\mathfrak g\simeq \mathfrak{aff}(\mathbb C)$.
			
	To conclude the proof when $\mathfrak g \simeq \mathfrak{sl}(2,\mathbb C)$  it suffices to verify that
	$\tilde{v}_3 = \varphi^* z^2\frac{\partial}{\partial z}$ belongs to $\mathfrak g$. Let $v_3 \in \mathfrak{g}$ be such that
	$[v_1,v_3]=2 v_2$. Clearly $[v_1, \tilde{v}_3]$ is also equal to $2v_2$. Thus $[v_1, v_3 - \tilde{v}_3] = 0$ and $\tilde{v}_3$ must be
	linear combination with constant coefficients of $v_1$ and $v_3$.
\end{proof}

\subsection{Algebraic normalization of primitive Lie algebras of affine type}
A natural generalization of the argument used to prove Proposition \ref{P:rationalLie} gives the algebraic normalization of primitive Lie algebras of affine type.

\begin{prop}\label{P:affine}
    Let  $\mathfrak g$ be a non-abelian primitive Lie algebra of rational vector fields on a projective manifold $X$ of dimension $n$.
    If $\mathfrak g$ is of affine type then there exists a rational map $\varphi : X \dashrightarrow \mathbb C^n$
	which conjugates $\mathfrak g$ to a  Lie algebra of polynomial vector fields of degree at most one.
\end{prop}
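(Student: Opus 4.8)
The plan is to follow the strategy of Proposition \ref{P:rationalLie}, replacing the pair $(v_1,v_2)$ by the abelian ideal supplied by Morozov's Theorem. By Theorem \ref{T:Morozov} the affine type means $\mathfrak g=\mathfrak h\ltimes\mathfrak m$, where $\mathfrak h$ is the isotropy, $\mathfrak m$ is an abelian ideal of dimension $n=\dim X$, and $\rho:\mathfrak h\to\mathfrak{gl}(\mathfrak m)$ is faithful and irreducible; I fix a basis $v_1,\dots,v_n$ of $\mathfrak m$ and write $[h,v_i]=\sum_j\rho(h)_{ij}v_j$. Since $\mathfrak g$ is transitive and $\mathfrak m$ is a vector-space complement of $\mathfrak h$, evaluation at a general point identifies $\mathfrak m$ with $T_pX$; hence $v_1,\dots,v_n$ are pairwise commuting rational vector fields forming a $\C(X)$-frame. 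I would reduce the statement to producing rational functions $f_1,\dots,f_n$ with $v_j(f_i)=\delta_{ij}$ and then setting $\varphi=(f_1,\dots,f_n):X\dashrightarrow\C^n$. Indeed $\varphi_*v_i=\partial/\partial z_i$ by construction, while the commutation relations give $v_i\bigl(h(f_j)+\sum_k\rho(h)_{kj}f_k\bigr)=0$ for all $i$, so this function is $\C(X)$-constant and $h(f_j)$ is \emph{affine} in the $f_k$. Thus every element of $\mathfrak g$ is pushed forward to a vector field of degree $\le 1$, and effectiveness (the faithfulness of $\mathfrak m$, so that $\mathfrak h$ contains no nonzero ideal) makes $\varphi_*$ injective, i.e. a genuine conjugation.

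Everything therefore reduces to the existence of the $f_i$, which is the crux. Let $\omega_1,\dots,\omega_n$ be the dual coframe of rational $1$-forms, $\omega_i(v_j)=\delta_{ij}$. Because the functions $\omega_i(v_j)$ are constant and $[v_j,v_k]=0$, the intrinsic identity $d\omega_i(v_j,v_k)=v_j(\omega_i(v_k))-v_k(\omega_i(v_j))-\omega_i([v_j,v_k])$ vanishes on the whole frame, so each $\omega_i$ is \emph{closed}. Finding $f_i$ with $df_i=\omega_i$ is exactly the requirement that these closed rational $1$-forms be \emph{exact}, and I expect this to be the main obstacle: a closed rational form on a projective manifold is exact only when its residues and periods vanish, and the purely abelian case (e.g. invariant vector fields on a torus) shows this can genuinely fail. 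This is precisely where the non-abelian hypothesis must enter.

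The key idea to overcome the obstacle combines the $\mathfrak h$-action with Cartan's formula. Let $W=\langle\omega_1,\dots,\omega_n\rangle_\C$ and write $\mathcal L_h$ for the Lie derivative. A computation parallel to the one above gives $\mathcal L_h\omega_i=-\sum_j\rho(h)_{ji}\omega_j$, so $W$ is $\mathfrak h$-stable and isomorphic to the dual module $\mathfrak m^*$, hence \emph{irreducible}. On the other hand, closedness of $\omega_i$ and Cartan's formula $\mathcal L_h=d\circ\iota_h+\iota_h\circ d$ yield $\mathcal L_h\omega_i=d(\iota_h\omega_i)$ with $\iota_h\omega_i=\omega_i(h)\in\C(X)$, so $\mathcal L_h\omega_i$ is \emph{exact} with rational potential. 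Consequently the subspace $W_0=W\cap d\C(X)$ of exact forms is an $\mathfrak h$-submodule of $W$ containing $\mathfrak h\cdot W$. Since $\mathfrak g$ is non-abelian we have $\mathfrak h\neq 0$, so by faithfulness the action on the irreducible module $W$ is nontrivial and $\mathfrak h\cdot W=W$; therefore $W_0=W$ and every $\omega_i$ is exact. This produces the desired rational functions $f_i$, and feeding them into the first paragraph completes the proof.
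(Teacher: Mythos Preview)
Your proof is correct and takes a genuinely different route from the paper's, though the two arguments are secretly producing the same rational functions. The paper never passes through closed $1$-forms: it writes down the coordinate functions directly as the components of elements $h\in\mathfrak h$ in the frame $v_1,\dots,v_n$, packaged as $\psi(\theta\otimes h)=\dfrac{\theta\wedge h}{\Theta_0}$ with $\Theta_0=v_1\wedge\cdots\wedge v_n$ and $\theta\in\wedge^{n-1}\mathfrak m$; the space $V_0$ so obtained is shown to be $\mathfrak g$-stable (for a twisted action) and $n$-dimensional, the latter because a smaller $V_0$ would cut out a $\mathfrak g$-invariant foliation, contradicting primitiveness. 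You instead start from the dual coframe $\omega_i$, observe it is closed because $\mathfrak m$ is abelian, and then force exactness by the Cartan identity $\mathcal L_h\omega_i=d(\iota_h\omega_i)$ together with the irreducibility of $W\cong\mathfrak m^*$; the resulting primitives $f_i$ are, up to additive constants, $\C$-linear combinations of the very functions $\omega_j(h)=\iota_h\omega_j$ that span the paper's $V_0$. What your approach buys is a clean separation of roles: abelianness of $\mathfrak m$ gives closedness, and the non-abelian hypothesis (hence $\mathfrak h\neq 0$ acting faithfully and irreducibly) is consumed exactly once, to pass from closed to exact. What the paper's approach buys is that it bypasses the closed/exact dichotomy entirely and needs no $1$-form calculus, at the cost of a slightly less transparent use of primitiveness via the ``no invariant foliation'' argument. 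One small remark: your appeal to effectiveness for injectivity of $\varphi_*$ is unnecessary, since $d\varphi=(\omega_1,\dots,\omega_n)$ already has maximal rank at a general point, so $\varphi$ is generically \'etale and $\varphi_*$ is injective on vector fields for that reason alone.
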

\begin{proof}
    As  case $n=1$ has already been treated in Proposition \ref{P:rationalLie}, we will assume $n\ge 2$.
	Since $\mathfrak g$ is of affine type, we can write $\mathfrak g = \mathfrak h \ltimes \mathfrak m$ with
	$\left[ \mathfrak m, \mathfrak m \right] =0$. Near a general point $p$ of $X$ and up to isomorphism of $\mathfrak g$,
    one can suppose that $\mathfrak h$ is the isotropy algebra ${ \mathfrak h}_p$ and $\mathfrak m$ is generated over $\C$ by $\frac{\partial}{\partial x_1}, \ldots, \frac{\partial}{\partial x_n}$ in suitable local analytic coordinates $(x_1,\ldots,x_n)$.
	
    In order to establish the proposition, we are going to produce a $\mathbb C$-vector space $V \subset \mathbb C(X)$
    of dimension $n +1$, containing the constants $\mathbb C$, and invariant by the action of $\mathfrak g$ by derivations.
	
    To construct $V$ let us consider $\Theta_0 \in \det T_X \otimes \mathbb C(X)$ defined by the determinant of $\mathfrak m$, i.e., if $v_1,\ldots, v_n$ is a basis for $\mathfrak m$ then $\Theta_0 = v_1\wedge \cdots \wedge v_n$.
    Notice that for any $v \in \mathfrak g$,
    \begin{equation}\label{E:Trace}
        \mathcal L_v \Theta_0= \Tr(\ad(v)) \Theta_0
    \end{equation}
    where $\mathcal L_v$ is the Lie derivative with respect to $v$ (acting on multivector fields), $\ad(v) \in \End(\mathfrak m)$ is the endomorphism $\ad(v)(m) = [v,m]$, and $\Tr$ is the trace of an endomorphism.
	
    Let us consider the $\mathfrak g$-module $W = \wedge^{n-1} \mathfrak m \otimes \mathfrak g$. The action of $\mathfrak g$ on $W$ is obtained by distributing its action on factors employing Leibniz's rule, that is
    \[
        v \cdot ( \theta\otimes w):= \mathcal L_v\theta\otimes w +\theta\otimes [v,w].
    \]
    Consider the $\C$-linear map
    \begin{align*}
        \psi: W &\longrightarrow \mathbb C(X) \\
        \theta\otimes w &\mapsto \frac{\theta\wedge w} {\Theta_0}
    \end{align*}
    and let $V$ denote its image.
	One can, in addition, regard $\psi$ as a morphism of $\mathfrak g$-modules
    if one endows $\C(X)$ with the structure of $\mathfrak g$-module defined by
	\[
        v\cdot f:= v(f)+\Tr(\ad v)f
    \]
	where $v(f)$ is the usual derivative with respect to $v$.
	In particular, this implies that the image $V$ of $W$ is stable by $\mathfrak g$-derivations.
    Observe also that the splitting (as a vector space) $\mathfrak g= \mathfrak h\oplus \mathfrak m$ induces the splitting
	\[
        V:=\psi(W)= \C\oplus V_0
    \]
	and that  $\C= \psi ( \wedge^{n-1} \mathfrak m \otimes \mathfrak m$), $V_0=\psi ( \wedge^{n-1} \mathfrak m \otimes \mathfrak h)$.
	Indeed, non-zero functions in $V_0$ are non-constant as they vanish at the point $p$ fixed by the isotropy subalgebra  $\mathfrak h$.
    Remark finally that $V_0$ is not reduced to $\{ 0\}$, as $\mathfrak h\not =\{0\}$ (non-abelian case).
	
	Consider the action of $\mathfrak g$  by derivations
	on $V$.
	The subspace $V_0$   is invariant by the action of $\mathfrak h$ and is mapped to $\mathbb C$ by $\mathfrak m$.
    In particular, in the local coordinate chart defined above, the elements of $V_0$ are nothing but linear forms in the variables $( x_1,\ldots,x_n)$.
	
	Let us prove that $\dim V_0 = n$.
    On the one hand, the dimension of $V_0$  is at most $n$ as an immediate consequence of the previous description.
	On the other hand, the dimension of $V_0$ is at least $n$ as otherwise, the functions in $V_0$ would define a
    foliation of positive dimension $=n-\dim_\C V_0$  invariant by the Lie algebra $\mathfrak g$, contradicting the primitiveness of $\mathfrak g$.
	
	It is now clear that the rational map $\varphi : X \dashrightarrow \mathbb C^n$ with entries given by a basis of $V_0$ sends
	$\mathfrak g = \mathfrak h \ltimes \mathfrak m$ to a Lie algebra of vector fields of degree at most one. 	
\end{proof}

\subsection{Proof of Theorem \ref{THM:B}}
    Let $\mathfrak g$ be a non-abelian primitive Lie algebra with isotropy $\mathfrak h$.
    By the observations made in Subsection \ref{SS:curves}, one can assume that $\mathfrak h$ has codimension at least two in $\mathfrak g$. This easily implies that  $N_{\mathfrak g}(\mathfrak h) = \mathfrak h$.
    Otherwise, $N_{\mathfrak g}(\mathfrak h)$ would provide either an intermediate Lie algebra between $\mathfrak g$ and $\mathfrak h$, or $N_{\mathfrak g}(\mathfrak h)=\mathfrak g$. However, this latter case is excluded by effectiveness.

    Thus if $\mathfrak g$  happens to be a complete Lie algebra, then it satisfies the assumptions of Theorem \ref{THM:A} and the result follows. Theorem  \ref{T:Morozov} tells us that $\mathfrak g$ is of affine type or semi-simple in the two other cases.
    In the semi-simple case, $\mathfrak g$ is complete. Indeed, it is centerless, and all derivations are inner
    as shown in \cite[Chapter III, Theorem 9]{MR559927}.
    We can therefore conclude with Theorem \ref{THM:A} in the semi-simple case.

    On the other hand, when $\mathfrak g$ is a primitive Lie algebra of affine type, we conclude with Proposition \ref{P:affine} provided that $\mathfrak h$ is an \textit{algebraic} subalgebra of $\mathfrak{gl}(\mathfrak m)$ or, equivalently,  that one can integrate the isotropy Lie subalgebra  at $0$ given by $\varphi (\mathfrak h)$ (see proof of Proposition \ref{P:affine} ) to a \textit{closed} subgroup $H$ of $\mathrm{GL}(\C^n)\subset \mathrm{Aff}(\C^n)$. If this is the case, then
    we can identify $\mathbb C^n$, the target of the rational map given by Proposition \ref{P:affine}, with the homogeneous space $( H \ltimes \C^n)/H$ in such a way
    that $\varphi_* \mathfrak g$ is the Lie algebra of right invariant vector fields on $(H \ltimes \C^n)/H$.

    If $\mathfrak h$ is semi-simple, then the algebraicity of  $\mathfrak h$ follows from a result by Chevalley, which shows that
    the derived algebra $[\mathfrak g,\mathfrak g]$ of any lie algebra $\mathfrak g \subset \mathfrak gl(\C^n)$ is algebraic, see  \cite[Chapter II, Corollary 7.9]{MR1102012}. If instead $\mathfrak h$ is the direct sum of a semi-simple Lie algebra and a one-dimensional Lie algebra $\mathfrak L$, then the $\mathfrak h$-irreducibility of $\mathfrak m$ together with  Schur's Lemma imply that $\mathfrak L$ coincides with the center of $\mathfrak h$ and acts on $\mathfrak m$ by scalar multiplication, cf. discussion in \cite[Section 4]{MR2902724} after the statement of Morozov's Theorem. The algebraicity of $\mathfrak h$ follows.
\qed

\end{document}